\documentclass[12pt,leqno]{amsart}

\usepackage[T1]{fontenc}
\usepackage[utf8]{inputenc}
\usepackage{geometry}
\usepackage{amsmath,amssymb,amsthm,mathtools}
\usepackage{booktabs}
\usepackage{array}
\usepackage{capt-of}
\usepackage{microtype}
\usepackage{enumitem}
\usepackage[colorlinks=true,allcolors=black]{hyperref}

\numberwithin{equation}{section}
\numberwithin{table}{section}
\setlist{topsep=3pt,itemsep=2pt,parsep=0pt}

\newtheorem{theorem}{Theorem}[section]
\newtheorem{proposition}[theorem]{Proposition}
\newtheorem{lemma}[theorem]{Lemma}
\newtheorem{corollary}[theorem]{Corollary}
\newtheorem{conjecture}[theorem]{Conjecture}
\newtheorem{question}[theorem]{Question}

\theoremstyle{definition}
\newtheorem{definition}[theorem]{Definition}
\newtheorem{example}[theorem]{Example}
\newtheorem*{ack}{Acknowledgments}
\theoremstyle{remark}
\newtheorem{remark}[theorem]{Remark}

\DeclareMathOperator{\Nef}{Nef}
\DeclareMathOperator{\N}{N}
\DeclareMathOperator{\G}{G}
\DeclareMathOperator{\Pic}{Pic}
\DeclareMathOperator{\Hom}{Hom}
\newcommand{\ZZ}{\mathbb Z}
\newcommand{\QQ}{\mathbb Q}
\newcommand{\RR}{\mathbb R}
\newcommand{\PP}{\mathbb P}
\newcommand{\cH}{\mathcal H}
\newcommand{\cR}{\mathcal R}
\newcommand{\tV}{\widetilde V}
\newcommand{\tB}{\widetilde B}
\newcommand{\NEbar}{\operatorname{NE}}
\newcommand{\gap}{\delta}

\title[The gap between Gongyo indices of toric Fano varieties]
{On the gap between the integral and rational Gongyo indices of toric Fano varieties}
\author[Hiroshi Sato]{Hiroshi Sato}

\date{}

\subjclass[2020]{Primary 14M25; Secondary 14J45, 14E30}
\keywords{toric variety, Fano variety, weak Fano variety, Gongyo index, pseudo-symmetric variety}

\address{Department of Applied Mathematics,
Faculty of Science, Fukuoka University,
8-19-1, Nanakuma, Jonan-ku, Fukuoka 814-0180, Japan}
\email{hirosato@fukuoka-u.ac.jp}

\hypersetup{
  pdftitle={On the gap between the integral and rational Gongyo indices of toric Fano varieties},
  pdfauthor={Hiroshi Sato},
  pdfsubject={The gap between Gongyo indices of toric Fano varieties},
  pdfkeywords={toric variety, Fano variety, weak Fano variety, Gongyo index, pseudo-symmetric variety}
}

\begin{document}

\begin{abstract}
Exact calculations for smooth toric Fano varieties in dimensions
at most seven suggest a sharp dimension-dependent lower bound
for the positive difference between the rational and integral
Gongyo indices.
We compute both indices for the smooth toric Fano models obtained
from blow-ups of projective space at torus-invariant points by
the anticanonical minimal model program.
For pseudo-symmetric smooth toric Fano varieties, we prove the
proposed lower bound and determine all equality cases.
We also apply a dimension-raising construction to obtain an explicit
infinite family of smooth toric Fano varieties with positive gap from
odd-dimensional weak Fano varieties with zero gap.
For the standard models considered, we determine the smallest
dimension increase needed to obtain a Fano variety by iterating
this construction and compute both indices of the minimal lifts.
Finally, every positive rational number occurs as a Gongyo-index
gap, even within the pseudo-symmetric class.
\end{abstract}

\maketitle

\section{Introduction}

For a smooth Fano variety $X$, Gongyo introduced the integral total
index $\tau_X(\ZZ)$ and asked whether it always agrees with
its rational counterpart $\tau_X(\QQ)$; in
this paper we call them the integral and rational Gongyo indices.
Both indices measure how large the sum of coefficients can be
in a decomposition of the anticanonical class into nonzero nef
integral divisor classes, using integral or rational coefficients,
respectively.
The first
negative example was found by Atsushi Ito in the non-toric setting: for the
degree-five del Pezzo surface
$S_5=\operatorname{Bl}_{p_1,\ldots,p_4}\PP^2$
with the four points in general position,
the two indices are $2$ and $5/2$,
respectively. This calculation is recorded in
\cite[Remark~5.8]{enwright-et-al-nef-complexity}; see also
\cite[Theorem~5.7]{enwright-et-al-nef-complexity} for the complete
calculation for smooth del Pezzo surfaces.
Gagliardi--Hofscheier--Pearson
subsequently answered the toric version negatively
by exhibiting the
Voskresenskij--Klyachko fourfold $V^4$: they proved
$\tau_{V^4}(\ZZ)=2$ and constructed
a rational decomposition of total
weight $5/2$ \cite[Example~5.4 and Question~5.3]{gagliardi-hofscheier-pearson}.
Our four-dimensional computation recovers this example,
proves the exact
value $\tau_{V^4}(\QQ)=5/2$,
and shows that $\tV^4$ is the only other smooth
toric Fano fourfold with positive gap.

Exact computations over the complete classifications
of smooth toric Fano varieties
in dimensions at most
seven show that
no positive difference occurs in dimensions at most three,
that the smallest positive difference is $1/2$
in dimensions four and five,
and that it is $1/3$ in dimensions six and seven.
Among the $80{,}892$
varieties in these seven classifications,
only $818$ have unequal Gongyo
indices; see Theorem~\ref{thm:low-dimensional-computation}.
Motivated by these computations,
we propose the following conjecture.

\begin{conjecture}[Minimal positive gap]
\label{conj:minimal-positive-gap}
Let $X$ be a smooth toric Fano variety of dimension $d\ge4$.
If
$\tau_X(\QQ)\neq\tau_X(\ZZ)$, then
\[
 \tau_X(\QQ)-\tau_X(\ZZ)
 \geq \frac{1}{\lfloor d/2\rfloor}.
\]
\end{conjecture}
One of the main results of this paper
is Theorem~\ref{thm:main}, which proves
Conjecture~\ref{conj:minimal-positive-gap}
for pseudo-symmetric smooth toric Fano varieties
and determines
all equality cases in this class.
In particular, the proposed lower bound is attained
in every dimension at least four.

To study these indices under birational modifications and the
dimension-raising construction used below, we work more generally
with a nonzero nef integral divisor class $D$.
We therefore define the integral and rational Gongyo indices
of the pair $(X,D)$.
The anticanonical specialization recovers
the usual invariants whenever
$-K_X$ is nef.
The Hilbert-basis formulation, upper bounds from linear functionals,
and product additivity all hold in this generality.
In particular, if the
nef cone is unimodular,
then the two indices agree for every such $D$.
We prove that this applies to
every generalized Bott manifold and to every
smooth projective toric variety of Picard number at most three.

A second part of the paper concerns the toric models constructed in
\cite{sato-tsuzuki-antiflips} from blow-ups of $d$-dimensional
projective space at $n$ torus-invariant points.
In the
range $2n-1<d$, the resulting model is a generalized Bott manifold and hence has no
gap.
In the critical range $2n-1\geq d$, the even- and odd-dimensional
models with common parameters $m\ge2$ and $m+1\le n\le2m+1$
have the same nef semigroup under the coordinates introduced below,
but different anticanonical classes.
We compute both branches, including the additional odd endpoint
$n=2m+2$. For $m\ge2$ and $n\ge m+2$, the $2m$-dimensional
models have gap $1/m$, as in the del Pezzo and pseudo-del Pezzo
families, whereas the $(2m+1)$-dimensional models are weak Fano
and have zero anticanonical gap.
We also prove that the anticanonical Gongyo indices are
invariant under toric flops between smooth projective
toric weak Fano varieties.

The odd-dimensional weak Fano models, despite having zero gap,
give rise to an explicit infinite family of smooth toric Fano
varieties with positive gap.
We use the dimension-raising construction introduced in
\cite{sato-index-two}, and show that it preserves the additive
structure of nef integral divisor classes while changing the
anticanonical class.
For each fixed standard model in the range
$m+2\le n\le2m+2$, we determine the smallest dimension
increase that yields a Fano variety by iterating this
construction and compute both Gongyo indices of the
resulting minimal lifts; see
Theorems~\ref{thm:ST-minimal-lift} and
\ref{thm:ST-lift-indices}.

Finally, the formulas for the del Pezzo and pseudo-del Pezzo factors,
together with Ewald's decomposition theorem and product additivity,
give both indices explicitly for pseudo-symmetric varieties and show
that every positive rational number occurs as a Gongyo-index gap
within this class.

Section~\ref{sec:total-index} develops the general formalism, including flop
invariance.  Section~\ref{sec:bott} treats generalized Bott manifolds and
splitting fans, and Section~\ref{sec:small-picard} treats Picard number at
most three.  Section~\ref{sec:ST} studies the toric models described
above and their Fano lifts.  Section~\ref{sec:pseudosym} proves the pseudo-symmetric
results.  The exact computations through dimension seven are collected in
Appendix~\ref{sec:computations}.

\begin{ack}
The author thanks Professor Yoshinori Gongyo for raising a question
about total indices in the toric setting several years ago
and for recent advice on this work.
The author is especially grateful to Professor
Osamu Fujino for drawing
the author's attention to the question of whether the integral
and rational Gongyo indices agree in the toric setting,
and for valuable advice and suggestions throughout the
preparation of this paper.

The author used ChatGPT (OpenAI) for assistance with developing
and checking mathematical arguments, searching the literature,
drafting and revising the manuscript, and developing and debugging
the computational and verification code.
The author takes full responsibility for the content of this
paper and its computational supplement.

The author was partially supported by JSPS KAKENHI Grant
Number JP24K06679.
\end{ack}

\section{Gongyo indices and toric computation}\label{sec:total-index}
All toric varieties below are defined over an algebraically closed field.
For a fan in a lattice $N$, we write $M=\Hom(N,\ZZ)$ for the dual lattice.

Gongyo's original invariant is attached to the anticanonical class of a Fano
variety \cite{gongyo-total-index}.  The integral and rational versions, and
Gongyo's equality question, are discussed in
\cite[Definition~5.1 and Question~5.3]{gagliardi-hofscheier-pearson}.  We use
the following slightly broader toric formulation because it makes the effect
of the dimension-raising construction in
Subsection~\ref{subsec:dimension-raising} transparent.

Throughout, an \emph{integral divisor}
means an integral Cartier divisor class,
and $D\neq0$ means a nonzero class in $\Pic(X)$.

\begin{definition}\label{def:total-index}
Let $X$ be a smooth projective toric variety, let
$0\neq D\in\Nef(X)\cap\Pic(X)$,
and let $\mathbb K\in\{\ZZ,\QQ\}$.  Set
\[
 \tau_X(D;\mathbb K)
 :=\sup\left\{
      \sum_{i=1}^r a_i
      \, \left|\,
       D=\sum_{i=1}^r a_iD_i,\
       0\neq D_i\in\Nef(X)\cap\Pic(X),\
       a_i\in\mathbb K_{>0}
     \right.\right\}.
\]
For $\mathbb K=\QQ$, the equality in the definition is taken in
$\Pic(X)\otimes_{\ZZ}\QQ$.
We call $\tau_X(D;\ZZ)$ and $\tau_X(D;\QQ)$ the
\emph{integral Gongyo index} and the \emph{rational Gongyo index} of the pair
$(X,D)$, respectively; equivalently, they are the Gongyo indices of $X$ with
respect to $D$.  Their difference
\[
 \delta_X(D):=\tau_X(D;\QQ)-\tau_X(D;\ZZ)\geq0
\]
is the \emph{Gongyo-index gap} of the pair $(X,D)$.

If $-K_X$ is nef and nonzero, we use the abbreviations
$\tau_X(\mathbb K):=\tau_X(-K_X;\mathbb K)$ and
$\delta(X):=\delta_X(-K_X)$.
Thus, for a smooth toric Fano variety these are precisely the usual integral
and rational Gongyo indices and their gap.
We use the same anticanonical
terminology for a smooth toric weak Fano variety.
\end{definition}

The extension from $D=-K_X$ to an arbitrary nef integral class is a
convention of the present paper.  Its role is auxiliary: the main questions
below concern the anticanonical specialization.
For a smooth complex toric Fano variety $X$, the nef complexity
$c_X$ of \cite[Definition~1.2]{enwright-et-al-nef-complexity} satisfies
$c_X=\dim X+\rho(X)-\tau_X(\QQ)$.
Indeed, numerical and linear equivalence of Cartier divisors agree
on a smooth complete toric variety.
Thus the rational index is already encoded by nef complexity;
here we study its difference from the integral index and compute
both indices for the toric families considered below.

\begin{remark}[Relation with classical nef-partitions]
A closely related classical construction is the theory of nef-partitions,
introduced by Borisov and developed
by Batyrev--Borisov and Batyrev--Nill in
connection with mirror symmetry for Calabi--Yau
complete intersections
\cite{borisov-nef-partitions,batyrev-borisov-complete-intersections,
batyrev-nill-mirror}. In particular, Batyrev and Nill proved that the
length of a proper nef-partition of
a $d$-dimensional reflexive polytope is at
most $2d$ \cite[Proposition~6.16]{batyrev-nill-mirror}.

Let $X$ be a smooth toric Fano variety and let $\Delta_{-K_X}$ be its
anticanonical polytope.  Every proper centered nef-partition of
$\Delta_{-K_X}$ gives an admissible integral decomposition of $-K_X$ and
therefore contributes its length to $\tau_X(\ZZ)$.
Conversely, after expanding integral coefficients, an admissible
divisor decomposition gives a lattice Minkowski decomposition of
$\Delta_{-K_X}$.
The classical definition additionally requires lattice points in
the summands whose sum is the origin.
For general reflexive polytopes this is an additional condition;
see \cite[Definition~3.1, Remark~3.5, and Example~3.8]{batyrev-nill-mirror}.
The maximal length of a proper nef-partition is therefore at most
$\tau_X(\ZZ)$; we do not use an identification
between these notions here.
\end{remark}

Let $L$ be a lattice,
let $C\subset L_{\RR}$ be a strongly convex rational
polyhedral cone,
and put $\Gamma=C\cap L$.
By Gordan's lemma,
$\Gamma$ is a finitely generated affine
semigroup; see \cite[Section~1.2]{cox-little-toric}.
We recall the standard notion of its Hilbert basis.

\begin{definition}[Hilbert basis]\label{def:hilbert-basis}
The \emph{Hilbert basis} $\cH(C,L)$ is the unique minimal finite generating
set of the additive monoid $\Gamma$.  Equivalently,
\[
 \cH(C,L)
 =\left\{h\in\Gamma\setminus\{0\}\,\left|\,
      h\neq u+v\text{ for all }u,v\in\Gamma\setminus\{0\}
      \right.\right\}.
\]
We also write $\cH(\Gamma)=\cH(C,L)$.
\end{definition}

\begin{lemma}[Basic properties of Hilbert bases]\label{lem:hilbert-basic}
Write $\cH(C,L)=\{h_1,\ldots,h_N\}$.  Then:
\begin{enumerate}[label=\textup{(\roman*)}]
 \item every element of $\Gamma$ is a nonnegative integral combination of
 $h_1,\ldots,h_N$;
 \item the primitive generator of every extremal ray of $C$ belongs to
 $\cH(C,L)$;
 \item if the primitive extremal-ray generators form a $\ZZ$-basis of $L$,
 then they are the Hilbert basis and every element of $\Gamma$ has a unique
 expression in them;
 \item for two affine semigroups
 $\Gamma_1=C_1\cap L_1$ and $\Gamma_2=C_2\cap L_2$,
 \[
  \cH(\Gamma_1\oplus\Gamma_2)
  =(\cH(\Gamma_1)\times\{0\})\cup(\{0\}\times\cH(\Gamma_2));
 \]
 \item for every $b\in\Gamma$, the polyhedron
 \[
  P_b:=\left\{x=(x_1,\ldots,x_N)\in\RR_{\geq0}^N\ \middle|\
              \sum_{j=1}^N x_jh_j=b\right\}
 \]
 is bounded.
\end{enumerate}
\end{lemma}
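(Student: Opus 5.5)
The central tool is a strictly positive linear functional. Since $C$ is strongly convex and rational, its dual cone $C^\vee$ is full-dimensional. So I fix a lattice point $\ell\in(L^*)$ in the interior of $C^\vee$. Then $\ell(x)>0$ for every $x\in C\setminus\{0\}$, and $\ell(x)\in\ZZ_{\ge1}$ for every $x\in\Gamma\setminus\{0\}$. With this $\ell$ the five items will follow one after another; the only delicate points are the existence claim behind (i) and the unimodularity argument in (iii).

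For (i), I argue by induction on $\ell(b)$ for $b\in\Gamma$. If $b$ is zero or lies in $\cH(C,L)$, there is nothing to prove. Otherwise $b=u+v$ with $u,v\in\Gamma\setminus\{0\}$, and then $\ell(u),\ell(v)<\ell(b)$, so the induction hypothesis applies to both. The same induction shows that every element of $\Gamma\setminus\{0\}$ is a sum of irreducible elements. Hence the set of irreducibles generates $\Gamma$. It is contained in every generating set, because an irreducible element cannot be written as a sum of two or more nonzero elements. This gives both uniqueness and minimality. Finiteness follows from Gordan's lemma: a finite generating set exists and must contain $\cH$.

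For (ii), let $\rho$ be an extremal ray with primitive generator $u_\rho$, and suppose $u_\rho=u+v$ with $u,v\in\Gamma\setminus\{0\}$. Extremality forces $u,v\in\rho$, so $u=a u_\rho$ and $v=b u_\rho$ with $a,b$ positive integers, by primitivity. Then $a+b=1$, which is impossible.

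For (iii), suppose the primitive generators $e_1,\dots,e_k$ form a $\ZZ$-basis of $L$. Then $C$ is simplicial and equals the cone $\sum\RR_{\ge0}e_i$, since a strongly convex polyhedral cone is generated by its extremal rays. Any $b\in\Gamma$ has unique integer coordinates in this basis, and they are nonnegative because $b\in C$ and the real coordinates are unique. So the $e_i$ generate $\Gamma$ with unique expressions. By (ii) they lie in $\cH$, and by the minimality in (i) they are all of $\cH$.

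For (iv), first note that $(u_1,u_2)$ is nonzero and decomposable in $\Gamma_1\oplus\Gamma_2$ if and only if some nonzero component decomposes, or both components are nonzero. In the latter case $(u_1,0)+(0,u_2)$ is a decomposition. So the irreducible elements are exactly $(h,0)$ and $(0,h)$ with $h$ irreducible in the corresponding factor. For (v), applying $\ell$ to $\sum x_jh_j=b$ gives $\sum x_j\ell(h_j)=\ell(b)$ with every $\ell(h_j)\ge1$. Therefore $0\le x_j\le\ell(b)$ for all $j$, so $P_b$ is bounded.
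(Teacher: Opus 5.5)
Your proof is correct. For (i)--(iv) it follows the paper. The paper says only that (i)--(iii) follow from generation, extremality and the lattice-basis property, and it gives the same two-component argument for (iv). You supply the details, including the equivalence in Definition~\ref{def:hilbert-basis} (that the irreducibles form the unique minimal generating set), by induction on a lattice functional $\ell$ from the interior of $C^\vee$.

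The genuine difference is in (v). The paper argues by contradiction and uses no functional: an unbounded $P_b$ contains a ray $x+tu$ with $u\ge0$, $u\ne0$ and $\sum_j u_jh_j=0$, and then $-h_{j_0}\in C$ contradicts strong convexity directly. You instead use $\ell(h_j)\ge1$ to get the explicit bound $\sum_j x_j\le\ell(b)$. Your route enters strong convexity through the duality fact that $C^\vee$ is full-dimensional, and in return gives a quantitative estimate. Indeed, $\ell$ is exactly a functional of the type in Corollary~\ref{cor:dual}. So in the application to $\Nef(X)\cap\Pic(X)$ it already shows that the linear programs in Proposition~\ref{prop:hilbert-formulation} are bounded, with $\tau_X(D;\QQ)\le\ell(D)$.

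One small step in (iv) is left implicit, by you and by the paper. If $(u_1,0)=(a_1,a_2)+(b_1,b_2)$, then $a_2=b_2=0$. This follows from $\ell_2(a_2)+\ell_2(b_2)=0$ for a functional $\ell_2$ that is strictly positive on $C_2\setminus\{0\}$.
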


\begin{proof}
Parts~\textup{(i)}--\textup{(iii)}
follow directly from the generation,
extremality, and lattice-basis properties.
For~\textup{(iv)}, an element
with two nonzero components is decomposable,
while an element supported in
one component is indecomposable exactly when that component is.
Finally, if $P_b$ were unbounded, it would contain a ray
$\{x+tu\mid t\ge0\}$ with $u=(u_1,\ldots,u_N)\ne0$.
Since every point of this ray has nonnegative coordinates,
we have $u_j\ge0$ for all $j$.
The defining equation of $P_b$ gives $\sum_j u_jh_j=0$.
Choose $j_0$ with $u_{j_0}>0$. Then
$-h_{j_0}=\sum_{j\ne j_0}(u_j/u_{j_0})h_j\in C$,
contradicting the strong convexity of $C$.
\end{proof}

For a smooth projective toric variety $X$, put
$\Gamma_X:=\Nef(X)\cap\Pic(X)$, and put
\[
 \cH_X:=\cH\left(\Nef(X),\Pic(X)\right)=\{h_1,\ldots,h_N\}.
\]
Gagliardi--Hofscheier--Pearson observed, for the anticanonical specialization,
that passage to the Hilbert basis turns the supremum into a maximum
\cite[Remark~5.5]{gagliardi-hofscheier-pearson}.  The same proof gives the
following form for every target $D$.

\begin{proposition}[Hilbert-basis formulation]
\label{prop:hilbert-formulation}
Let $X$ be a smooth projective toric variety and
$0\neq D\in\Gamma_X$.  Then
\begin{align*}
 \tau_X(D;\ZZ)
 &=\max\left\{\sum_{j=1}^N z_j\ \middle|\
      \sum_{j=1}^Nz_jh_j=D,\ z=(z_1,\ldots,z_N)\in\ZZ_{\geq0}^N\right\},\\
 \tau_X(D;\QQ)
 &=\max\left\{\sum_{j=1}^N x_j\ \middle|\
      \sum_{j=1}^Nx_jh_j=D,\ x=(x_1,\ldots,x_N)\in\QQ_{\geq0}^N\right\}.
\end{align*}
In particular, the suprema in Definition~\ref{def:total-index} are maxima.
\end{proposition}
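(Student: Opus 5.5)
We prove both formulas by comparing the feasible sets of Definition~\ref{def:total-index} with those of the two optimization problems in the statement. Throughout, $C=\Nef(X)$ and $L=\Pic(X)$. The cone $C$ is strongly convex because $X$ is projective, so Lemma~\ref{lem:hilbert-basic} applies.

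\emph{Step 1: every decomposition can be pushed down to the Hilbert basis without lowering its weight.} Take an admissible decomposition $D=\sum_{i=1}^r a_iD_i$ with $0\neq D_i\in\Gamma_X$ and $a_i\in\mathbb K_{>0}$. By Lemma~\ref{lem:hilbert-basic}(i), each $D_i$ can be written as $D_i=\sum_j c_{ij}h_j$ with $c_{ij}\in\ZZ_{\ge0}$. Since $D_i\neq0$, we have $\sum_j c_{ij}\ge1$. Set $x_j=\sum_i a_ic_{ij}$. This vector lies in $\mathbb K_{\ge0}^N$ and satisfies $\sum_j x_jh_j=D$. Its weight is
\[
 \sum_j x_j=\sum_i a_i\Bigl(\sum_j c_{ij}\Bigr)\ \ge\ \sum_i a_i .
\]
Conversely, any feasible $x$ (respectively $z$) with $x\neq0$ is itself an admissible decomposition, obtained by dropping the zero coefficients, because each $h_j$ is a nonzero element of $\Gamma_X$. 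Also $x=0$ is infeasible since $D\neq0$. Hence in each case the supremum in Definition~\ref{def:total-index} equals the supremum of $\sum_j x_j$ over the corresponding feasible set.

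\emph{Step 2: these suprema are attained.} All feasible sets lie in the polyhedron $P_D$, which is bounded by Lemma~\ref{lem:hilbert-basic}(v).

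In the integral case, the feasible set is $P_D\cap\ZZ^N$. This is a finite set, and it is nonempty by Lemma~\ref{lem:hilbert-basic}(i), so the maximum exists.

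In the rational case, $P_D$ is a nonempty polytope cut out by equations and inequalities with rational data, since the $h_j$ and $D$ lie in the lattice $\Pic(X)$. The linear functional $\sum_j x_j$ therefore attains its maximum over $P_D$ at a vertex. Every vertex of a polytope defined over $\QQ$ is rational: it is the unique solution of a rational linear system. So the maximum over the real polytope is achieved at a point of $P_D\cap\QQ^N$, and the maximum over the rational points equals the real maximum.

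The only point requiring care is this rational-vertex argument. Boundedness, which rests on strong convexity of the nef cone, is already supplied by Lemma~\ref{lem:hilbert-basic}(v), so the remaining argument is routine.
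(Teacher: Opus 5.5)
Your proposal is correct and follows essentially the same route as the paper's proof. You reduce to Hilbert-basis decompositions using Lemma~\ref{lem:hilbert-basic}(i), noting that $\sum_j c_{ij}\ge1$ so the weight cannot decrease, and then use the boundedness of $P_D$ from Lemma~\ref{lem:hilbert-basic}(v) to get finiteness in the integral case and attainment at a rational vertex in the rational case.
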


\begin{proof}
Write every $D_i$ in an admissible decomposition
$D=\sum_{i=1}^r a_iD_i$ as $D_i=\sum_j n_{ij}h_j$,
with $n_{ij}\in\ZZ_{\ge0}$.
Replacing each $D_i$ by this Hilbert-basis decomposition
does not decrease the total coefficient sum,
since $\sum_j n_{ij}\ge1$.
Collecting the coefficients of each $h_j$ then gives
nonnegative coefficients satisfying the corresponding
displayed conditions, with total sum at least $\sum_i a_i$.
Thus it suffices to consider decompositions
involving only Hilbert-basis elements.
Conversely, any coefficients satisfying those conditions
give an admissible decomposition.
It remains to show that both suprema are attained.
By Lemma~\ref{lem:hilbert-basic}~(i) and~(v), the polyhedron $P_D$
is bounded and contains an integral point.
Thus $P_D\cap\ZZ^N$ is finite and nonempty.
Moreover, the linear function $\sum_j x_j$
attains its maximum on $P_D$ at a vertex,
which is rational since $P_D$ is defined over $\QQ$.
Hence both suprema are attained, proving the displayed formulas.
\end{proof}

\begin{corollary}[Upper bounds from linear functionals]
\label{cor:dual}
With the notation above, let
$\lambda\in\Pic(X)^\vee_{\RR}$
satisfy $\lambda(h_j)\ge1$ for $1\le j\le N$.
Then $\tau_X(D;\QQ)\le\lambda(D)$.
\end{corollary}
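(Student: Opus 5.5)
The plan is to pair $\lambda$ against an optimal rational decomposition supplied by Proposition~\ref{prop:hilbert-formulation}, and then use $\lambda(h_j)\ge1$ term by term. No duality theory is needed; this is only the easy direction of linear-programming duality.

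First, invoke Proposition~\ref{prop:hilbert-formulation}. It gives $x=(x_1,\ldots,x_N)\in\QQ_{\ge0}^N$ with
\[
 \sum_{j=1}^N x_jh_j=D
 \quad\text{and}\quad
 \sum_{j=1}^N x_j=\tau_X(D;\QQ).
\]
Here the equality $\sum_j x_jh_j=D$ holds in $\Pic(X)\otimes_\ZZ\QQ$. Since $\Pic(X)$ is a lattice, it also holds in $\Pic(X)_\RR$, where $\lambda$ is defined and $\RR$-linear.

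Next, apply $\lambda$ to this equality and use $x_j\ge0$ together with $\lambda(h_j)\ge1$:
\[
 \lambda(D)=\sum_{j=1}^N x_j\lambda(h_j)\ge\sum_{j=1}^N x_j=\tau_X(D;\QQ).
\]
This is the claimed bound.

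Alternatively, one can avoid the maximum altogether. Take any admissible decomposition $D=\sum_i a_iD_i$ with $a_i\in\QQ_{>0}$ and $0\ne D_i\in\Gamma_X$. By Lemma~\ref{lem:hilbert-basic}~(i), each $D_i$ is a nonnegative integral combination of the $h_j$ with at least one nonzero coefficient, so $\lambda(D_i)\ge1$. Therefore $\lambda(D)=\sum_i a_i\lambda(D_i)\ge\sum_i a_i$, and taking the supremum gives the bound.

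There is no real obstacle. The only point requiring care is that the equality holds in $\Pic(X)\otimes\QQ$, so $\lambda$ must be extended linearly to $\Pic(X)_\RR$ before it is applied.
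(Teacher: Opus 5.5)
Your proposal is correct and matches the paper's proof, which bounds the coefficient sum of every nonnegative rational Hilbert-basis expression of $D$ by $\lambda(D)$ term by term and then invokes Proposition~\ref{prop:hilbert-formulation}. Your alternative route is also valid; it simply inlines the reduction to Hilbert-basis decompositions from that proposition's proof.
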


\begin{proof}
For any expression $D=\sum_{j=1}^N x_jh_j$
with $x_j\in\QQ_{\ge0}$, we have
\[
\sum_{j=1}^N x_j
\le \sum_{j=1}^N x_j\lambda(h_j)
= \lambda(D).
\]
The assertion follows from
Proposition~\ref{prop:hilbert-formulation}.
\end{proof}

\begin{remark}
For each $0\ne D\in\Gamma_X$, there exists
$\lambda\in\operatorname{Pic}(X)^\vee_{\mathbb R}$
such that $\lambda(h_j)\ge1$ for all $j$
and $\lambda(D)=\tau_X(D;\QQ)$.
This follows from Proposition~\ref{prop:hilbert-formulation}
and the strong duality
theorem for linear programming;
see, for example, \cite[Sections~5.2.3--5.2.4]{BV04}.
Only the upper bound in Corollary~\ref{cor:dual} is needed below.
\end{remark}

\begin{definition}\label{def:unimodular-cone}
Let $L$ be a lattice and let $C\subset L_{\RR}$ be a full-dimensional
strongly convex rational polyhedral cone.  We say that $C$ is
\emph{unimodular with respect to $L$} if its primitive extremal-ray
generators form a $\ZZ$-basis of $L$.
\end{definition}

\begin{proposition}[Unimodular nef cones]\label{prop:unimodular-gap-zero}
Let $X$ be a smooth projective toric variety whose nef cone is unimodular
with primitive generators $L_1,\ldots,L_\rho$.  If
\[
 0\neq D=b_1L_1+\cdots+b_\rho L_\rho\in\Gamma_X,
\]
then $b_i\in\ZZ_{\geq0}$ and
\[
 \tau_X(D;\ZZ)=\tau_X(D;\QQ)=b_1+\cdots+b_\rho.
\]
In particular, $\delta_X(D)=0$ for every nonzero nef integral class $D$.
\end{proposition}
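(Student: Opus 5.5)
By Lemma~\ref{lem:hilbert-basic}~(iii), the unimodularity hypothesis says that $L_1,\ldots,L_\rho$ is a $\ZZ$-basis of $\Pic(X)$. Moreover, it says that $\cH_X=\{L_1,\ldots,L_\rho\}$, and every element of $\Gamma_X$ is a unique nonnegative integral combination of the $L_i$. I will first show that the coefficients $b_i$ are nonnegative integers. Because the $L_i$ form a $\ZZ$-basis of $\Pic(X)$ and $D\in\Pic(X)$, the expansion $D=\sum b_iL_i$ is unique and has $b_i\in\ZZ$. Because $D$ is nef and the $L_i$ are the extremal-ray generators of a simplicial cone, all $b_i\ge0$. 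This is also immediate from the uniqueness statement in Lemma~\ref{lem:hilbert-basic}~(iii).

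Next I will apply Proposition~\ref{prop:hilbert-formulation} with $N=\rho$ and $h_j=L_j$. The $L_j$ are linearly independent in $\Pic(X)\otimes\QQ$. Hence the only rational solution of $\sum_j x_jL_j=D$ is $x=b$. So the feasible polyhedron $P_D$ is the single point $b$, and this point is integral. Both maxima in Proposition~\ref{prop:hilbert-formulation} therefore equal $\sum_i b_i$. This gives $\tau_X(D;\ZZ)=\tau_X(D;\QQ)=b_1+\cdots+b_\rho$, and consequently $\delta_X(D)=0$.

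Alternatively, the upper bound follows from Corollary~\ref{cor:dual}. Take $\lambda$ to be the coordinate-sum functional in the dual basis. Then $\lambda(L_j)=1$ for every $j$, so $\tau_X(D;\QQ)\le\lambda(D)=\sum_i b_i$. The decomposition $D=\sum b_iL_i$ realizes this value with integral coefficients. Since $\tau_X(D;\ZZ)\le\tau_X(D;\QQ)$, all three quantities are equal.

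No step is a real obstacle. The only point needing care is to note that the $L_i$ are linearly independent over $\QQ$, so that rational decompositions offer no extra freedom.
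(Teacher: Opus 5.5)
Your proof is correct and follows essentially the same route as the paper: the $L_i$ form both a lattice basis and the Hilbert basis of $\Gamma_X$, so the equation $\sum_j x_jL_j=D$ has the unique solution $x=b$. Since $b$ is integral, both maxima in Proposition~\ref{prop:hilbert-formulation} equal $\sum_i b_i$; your alternative upper bound via Corollary~\ref{cor:dual} is also valid but not needed.
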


\begin{proof}
The $L_i$ form a lattice basis and generate the nef cone, so
$\Gamma_X=\bigoplus_i\ZZ_{\geq0}L_i$.  Its Hilbert basis is exactly
$\{L_1,\ldots,L_\rho\}$,
and the equation $D=\sum_i x_iL_i$ has the unique
solution $(x_1,\ldots,x_\rho)=(b_1,\ldots,b_\rho)$.
\end{proof}

\begin{proposition}[Product additivity]\label{prop:product}
Let $X,Y$ be smooth projective toric varieties and let
$0\neq D\in\Gamma_X$, $0\neq E\in\Gamma_Y$.  For
$\mathbb K\in\{\ZZ,\QQ\}$,
\[
 \tau_{X\times Y}(p_X^*D+p_Y^*E;\mathbb K)
 =\tau_X(D;\mathbb K)+\tau_Y(E;\mathbb K),
\]
where $p_X:X\times Y\to X$ and $p_Y:X\times Y\to Y$ are
the projections.
Consequently,
\[
 \delta_{X\times Y}(p_X^*D+p_Y^*E)=\delta_X(D)+\delta_Y(E).
\]
In particular, the usual anticanonical indices are additive whenever
$-K_X$ and $-K_Y$ are nef.
\end{proposition}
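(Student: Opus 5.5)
The plan is to reduce everything to the Hilbert-basis formulation of Proposition~\ref{prop:hilbert-formulation} together with Lemma~\ref{lem:hilbert-basic}~(iv).

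\textbf{Step 1: the nef semigroup of a product.} For smooth projective toric varieties $X$ and $Y$, the fan of $X\times Y$ is the product fan. Hence $\Pic(X\times Y)=p_X^*\Pic(X)\oplus p_Y^*\Pic(Y)$, since the torus-invariant prime divisors are pullbacks of those on the factors and the character lattice splits. I will then check that $\Nef(X\times Y)=p_X^*\Nef(X)\oplus p_Y^*\Nef(Y)$, using torus-invariant curves. Every torus-invariant curve of $X\times Y$ corresponds to a wall of the product fan, namely a $(d-1)$-cone. Such a cone is either a wall of $\Sigma_X$ times a maximal cone of $\Sigma_Y$, or vice versa. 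So each invariant curve is of the form $C\times\{pt\}$ or $\{pt\}\times C'$. Intersecting $p_X^*D+p_Y^*E$ with $C\times\{pt\}$ gives $D\cdot C$, and similarly for the other type. Therefore the class is nef if and only if both $D$ and $E$ are nef. It follows that $\Gamma_{X\times Y}=\Gamma_X\oplus\Gamma_Y$ as affine semigroups.

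\textbf{Step 2: the Hilbert basis.} By Lemma~\ref{lem:hilbert-basic}~(iv), the Hilbert basis of $X\times Y$ is $\{p_X^*h\}\cup\{p_Y^*h'\}$, where $h\in\cH_X$ and $h'\in\cH_Y$.

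\textbf{Step 3: splitting the optimization.} The equation $\sum_j x_j p_X^*h_j+\sum_k y_k p_Y^*h'_k=p_X^*D+p_Y^*E$ splits, via the direct sum decomposition of $\Pic\otimes\QQ$, into two independent equations: $\sum_j x_jh_j=D$ and $\sum_k y_kh'_k=E$. The objective $\sum_j x_j+\sum_k y_k$ is likewise separable. Consequently, the maximum over $\mathbb K_{\ge0}$ is the sum of the two maxima given by Proposition~\ref{prop:hilbert-formulation}, for both $\mathbb K=\ZZ$ and $\mathbb K=\QQ$. Subtracting the integral identity from the rational one gives additivity of $\delta$.

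\textbf{Step 4: the anticanonical case.} This follows from $-K_{X\times Y}=p_X^*(-K_X)+p_Y^*(-K_Y)$. Nonzeroness of this class is automatic when at least one of $-K_X$, $-K_Y$ is nonzero, which always holds for projective toric varieties of positive dimension.

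The only mildly nontrivial point is Step 1, specifically the identification of the nef cone of the product. I will handle it with the wall/invariant-curve description of the product fan, or alternatively via the support-function characterization: a piecewise linear function on a product fan is convex if and only if it is convex on each factor, because it decomposes as a sum of pullbacks.
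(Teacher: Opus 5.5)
Your proposal is correct and follows the paper's proof. Both split $\Pic$ and the nef semigroup of $X\times Y$, take the product Hilbert basis from Lemma~\ref{lem:hilbert-basic}~(iv), and separate the optimization in Proposition~\ref{prop:hilbert-formulation}. Your check of $\Nef(X\times Y)=\Nef(X)\oplus\Nef(Y)$ via invariant curves $C\times\{pt\}$ and $\{pt\}\times C'$ just fills in a step the paper states without proof.
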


\begin{proof}
For complete toric varieties,
$\Pic(X\times Y)=\Pic(X)\oplus\Pic(Y)$ and
$\Gamma_{X\times Y}=\Gamma_X\oplus\Gamma_Y$.
By Lemma~\ref{lem:hilbert-basic}~(iv),
the Hilbert basis of
$\Gamma_{X\times Y}$ is
$(\cH_X\times\{0\})\cup(\{0\}\times\cH_Y)$.
A decomposition of $(D,E)\in\Gamma_X\oplus\Gamma_Y$
into these elements is equivalent
to a pair of Hilbert-basis decompositions of $D$ and $E$,
and its coefficient sum is the sum of their coefficient sums.
Proposition~\ref{prop:hilbert-formulation} proves the assertion.
\end{proof}

\begin{proposition}[Flop invariance]\label{prop:flop-invariance}
Let $X\dashrightarrow X^+$ be an elementary toric flop
between smooth projective toric varieties, and let $R$
be the extremal ray on $X$ defining the flopping contraction.
Identify $\Pic(X)$ and
$\Pic(X^+)$ by strict transform.
If $0\ne D\in\Nef(X)\cap\Pic(X)$
and $D\cdot R=0$, then its strict transform $D^+$ is nef and
\[
\tau_X(D;\mathbb K)=\tau_{X^+}(D^+;\mathbb K)
\qquad (\mathbb K\in\{\ZZ,\QQ\}).
\]
In particular, if $X$ and $X^+$ are weak Fano, then
\[
\tau_X(\mathbb K)=\tau_{X^+}(\mathbb K)
\qquad (\mathbb K\in\{\ZZ,\QQ\}),
\qquad
\delta(X)=\delta(X^+).
\]
\end{proposition}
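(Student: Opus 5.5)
We use Proposition~\ref{prop:hilbert-formulation}: both indices depend only on the semigroup $\Gamma_X$ and the position of $D$ in it. We will show that the strict-transform identification $\Pic(X)\cong\Pic(X^+)$ carries the face $\Gamma_X\cap R^\perp$ of $\Gamma_X$ isomorphically onto a face of $\Gamma_{X^+}$ containing $D^+$. The indices can then be computed inside that common face.

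First recall the geometry of the elementary toric flop. There are two small contractions $\varphi\colon X\to Y$ and $\varphi^+\colon X^+\to Y$ to the same projective toric variety $Y$. The relative Picard numbers are one, so $\Nef(X)\cap\Nef(X^+)$, viewed inside the common space $\Pic(X)_\RR=\Pic(X^+)_\RR$, is the common face $\varphi^*\Nef(Y)$. Concretely, $\Nef(X)\cap R^\perp=\varphi^*\Nef(Y)$, because a nef class that is trivial on $R$ descends to $Y$. Likewise $\Nef(X^+)\cap (R^+)^\perp=(\varphi^+)^*\Nef(Y)$, where $R^+$ is the flopped ray.

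The pullbacks $\varphi^*D_Y$ and $(\varphi^+)^*D_Y$ are strict transforms of each other, since $\varphi$ and $\varphi^+$ are isomorphisms in codimension one. The identification therefore maps $\Nef(X)\cap R^\perp$ onto $\Nef(X^+)\cap(R^+)^\perp$. It also preserves integrality, since it identifies the Picard lattices. Given $D$ nef with $D\cdot R=0$, we can write $D=\varphi^*D_Y$ with $D_Y$ a Cartier class on $Y$. Its strict transform is then $D^+=(\varphi^+)^*D_Y$, which is nef. One might worry whether $D_Y$ is Cartier; the cleaner route is to stay on the lattice side. The identification is a lattice isomorphism, so the face $F:=\Gamma_X\cap R^\perp$ maps bijectively onto $F^+:=\Gamma_{X^+}\cap (R^+)^\perp$.

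The key combinatorial step is that a face of a saturated affine semigroup is closed under decomposition. If $D=\sum a_iD_i$ with $D_i\in\Gamma_X$ and $a_i>0$, then $0=D\cdot R=\sum a_i(D_i\cdot R)$ with every term nonnegative, so every $D_i\cdot R=0$ and hence $D_i\in F$. Every admissible decomposition of $D$ in $\Gamma_X$ is thus a decomposition in $F$, and the same holds for $D^+$ on $X^+$. Transport along the bijection $F\to F^+$ therefore gives a coefficient-preserving bijection between admissible decompositions of $D$ and of $D^+$, over both $\ZZ$ and $\QQ$. Taking suprema in Definition~\ref{def:total-index} yields the equality of indices. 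Alternatively, one can note that $\cH_X\cap F$ is the Hilbert basis of $F$ and apply Proposition~\ref{prop:hilbert-formulation}.

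For the weak Fano statement we take $D=-K_X$. Since the contraction is a flop, $K_X\cdot R=0$, and the strict transform of $-K_X$ is $-K_{X^+}$. The general statement then gives equality of both indices, and hence of the gaps.

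The main obstacle is justifying the identification of the faces: that a nef class on $X$ trivial on $R$ has nef strict transform on $X^+$. I would handle this fan-theoretically. A nef class trivial on $R$ corresponds to a piecewise-linear support function that is linear on the union of the two maximal cones meeting along the wall of $R$, and hence on the wall-crossing region that the flop re-triangulates. Its convexity on the fan of $X^+$ follows because the flop changes only the subdivision inside that region, where the function is linear. Integrality is automatic because the function is unchanged.
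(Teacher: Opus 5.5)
Your proof is correct. Its second half is exactly the paper's argument: from $0=D\cdot R=\sum a_i(D_i\cdot R)$ with nonnegative terms, every $D_i$ lies in the face $R^\perp$, so admissible decompositions of $D$ and $D^+$ correspond with the same coefficient sums.

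Where you differ is in proving that strict transform sends nef classes trivial on $R$ to nef classes.
\begin{itemize}
\item The paper passes to the common smooth blow-up $W$ of $X$ and $X^+$, obtained by adding the ray $w=x_1+\cdots+x_s$. It proves $p^*A-q^*A^+=-(A\cdot\gamma)E$, and concludes that $A$ is nef iff $A^+$ is nef, since nefness can be tested after pullback by the proper surjections $p,q$.
\item You go down to the flopping contraction $X\to Y\leftarrow X^+$. Equivalently, you observe that the support function of $D$ is linear on every cone of $Y$, so it is literally the same convex function when read on the fan of $X^+$.
\end{itemize}
The paper's route uses only the explicit primitive relation and never needs the base $Y$ or descent of classes. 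Yours identifies $\Nef(X)\cap R^\perp=\varphi^*\Nef(Y)$ as the common wall of the two nef chambers. That is more conceptual, and the support-function version is fully elementary.

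Two small corrections.
\begin{itemize}
\item Your worry about whether $D_Y$ is Cartier is unnecessary in the toric setting. On each cone of $Y$, the support function equals the integral $m_\sigma\in M$ of any maximal cone $\sigma$ of $X$ contained in it.
\item The phrase ``the two maximal cones meeting along the wall of $R$, and hence on the wall-crossing region'' should refer to all walls whose curves lie in $R$. Each cone $\langle x_1,\ldots,x_s,y_1,\ldots,y_s\rangle+\tau$ of $Y$ contains $s$ maximal cones of $X$. The support function is linear on each such cone of $Y$ separately, not on the whole re-triangulated region. This is still all your argument needs, since every cone of $X^+$ lies in a cone of $Y$.
\end{itemize}
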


\begin{proof}
Let $\gamma$ be the numerical class of the extremal primitive
relation
\[
x_1+\cdots+x_s=y_1+\cdots+y_s
\qquad (s\ge2)
\]
corresponding to the flop, so that $R=\RR_{\ge0}\gamma$.
The smoothness of both varieties makes all nonzero coefficients
of the primitive relation equal to $1$ in absolute value,
and crepancy makes the two sides have the same number of terms.
Set $w:=x_1+\cdots+x_s=y_1+\cdots+y_s$.
Introducing the ray generated by $w$ gives a common
smooth toric variety $W$ with blow-ups
$p:W\to X$ and $q:W\to X^+$ along
$V(\langle y_1,\ldots,y_s\rangle)$ and
$V(\langle x_1,\ldots,x_s\rangle)$, respectively;
see, for example, \cite[Definition~6.18]{sato-towardfano}.
Let $E$ be their common exceptional divisor on $W$.

For a torus-invariant divisor $A=\sum_v c_vD_v$ on $X$,
let $A^+$ denote its strict transform on $X^+$,
where $D_v$ stands for the torus-invariant prime divisor
associated to a $1$-dimensional cone $\RR_{\ge 0}v$.
The pullback formulas give
\begin{equation}\label{eq:flop-pullback}
p^*A-q^*A^+
=
\left(\sum_{j=1}^s c_{y_j}
      -\sum_{j=1}^s c_{x_j}\right)E
=-(A\cdot\gamma)E.
\end{equation}
Thus \eqref{eq:flop-pullback} gives $p^*A=q^*A^+$
whenever $A\cdot\gamma=0$.
For such a class $A$, nefness can be checked after pullback
by the proper surjective morphisms $p$ and $q$.
Hence $A$ is nef if and only if $A^+$ is nef.

Fix $\mathbb K\in\{\ZZ,\QQ\}$ and let $D=\sum_i a_iD_i$
be an admissible decomposition.
The equality $0=D\cdot \gamma=\sum_i a_i(D_i\cdot \gamma)$,
together with the nefness of each $D_i$ and the positivity
of each $a_i$, implies $D_i\cdot \gamma=0$ for all $i$.
Hence each $D_i^+$ is nef.
Strict transform preserves nonzero integral classes,
so $D^+=\sum_i a_iD_i^+$ is an admissible decomposition
with the same coefficient sum.
The same argument applies to the inverse flop,
whose extremal relation has numerical class $-\gamma$.
Therefore $\tau_X(D;\mathbb K)=\tau_{X^+}(D^+;\mathbb K)$.
The anticanonical assertion follows by taking $D=-K_X$.
\end{proof}

\section{Generalized Bott manifolds and splitting fans}\label{sec:bott}

An $r$-stage generalized Bott tower is a sequence
\[
 B_r\longrightarrow B_{r-1}\longrightarrow\cdots\longrightarrow B_1\longrightarrow B_0=\{\mathrm{pt}\},
\]
where
\[
 B_p=\PP_{B_{p-1}}
 \bigl(\mathcal O_{B_{p-1}}\oplus L_p^{(1)}\oplus\cdots\oplus L_p^{(n_p)}\bigr)
\]
for line bundles $L_p^{(k)}$ on $B_{p-1}$.  The top variety $B_r$ is called a generalized Bott manifold.  If $n_p=1$ for every $p$, it is a Bott manifold.

Suyama gives an explicit fan description \cite{suyama-generalized-bott}.  Put
\[
 N:=\bigoplus_{p=1}^r\bigoplus_{k=1}^{n_p}\ZZ e_p^k.
\]
The rays are partitioned into blocks
\[
 P_p=\{u_p^0,u_p^1,\ldots,u_p^{n_p}\},
 \qquad
 u_p^k=e_p^k\quad(k\geq1),
\]
where $u_p^0$ is the negative of the sum of the basis vectors in the $p$-th block, plus an integral linear combination of basis vectors in later blocks.  The primitive collections are exactly $P_1,\ldots,P_r$, and their primitive relations have the upper-triangular form
\begin{equation}\label{eq:bott-relation}
 \gamma_p:\quad
 u_p^0+u_p^1+\cdots+u_p^{n_p}
 =\sum_{q>p}\sum_{k=0}^{n_q}c_{p,q,k}u_q^k,
 \qquad c_{p,q,k}\in\ZZ_{\geq0}.
\end{equation}
Some coefficients on the right may be zero.
The corresponding curve classes $\gamma_1,\ldots,\gamma_r$
generate the Mori cone; see
\cite[Remark~5]{suyama-generalized-bott}.

For a smooth complete toric variety $X=X_\Sigma$, let
\[
 \cR_\Sigma
 :=\ker\left(\ZZ^{\Sigma(1)}\longrightarrow N,
              \ e_\rho\longmapsto v_\rho\right).
\]
By the toric divisor sequence and its dual, $\cR_\Sigma$ is naturally identified with the integral numerical curve lattice
$\N_1(X)_\ZZ=\Hom(\Pic(X),\ZZ)$.

For a primitive collection $P$ with relation
$x_1+\cdots+x_\ell=\sum_j a_jy_j$ and
the corresponding numerical class $\gamma$,
we write $\deg(P)=\ell-\sum_j a_j=(-K_X)\cdot\gamma$;
we also denote this integer by $\deg(\gamma)$.

\begin{theorem}[Unimodularity for generalized Bott manifolds]\label{thm:bott-unimodular}
Let $X$ be a generalized Bott manifold.  Then the primitive-relation classes
$\gamma_1,\ldots,\gamma_r$
form a $\ZZ$-basis of $\N_1(X)_\ZZ$ and generate $\NEbar(X)$.  Consequently, the Mori cone is unimodular with respect to $\N_1(X)_\ZZ$, and the nef cone is unimodular with respect to $\Pic(X)$.
\end{theorem}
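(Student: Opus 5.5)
We work with the identification $\N_1(X)_\ZZ=\cR_\Sigma\subset\ZZ^{\Sigma(1)}$ and aim to show three things in turn: that the classes $\gamma_1,\ldots,\gamma_r$ form a $\ZZ$-basis of $\cR_\Sigma$; that they generate $\NEbar(X)$; and that unimodularity then passes from the Mori cone to the nef cone. The rank is immediate. There are $\sum_p(n_p+1)$ rays and $\dim N=\sum_p n_p$, so $\rho(X)=r$.

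For the basis property, order the coordinates of $\ZZ^{\Sigma(1)}$ block by block and look only at the coefficient of $u_p^0$ in each relation. By \eqref{eq:bott-relation}, $\gamma_p$ has coefficient $1$ on $u_p^0$. For $q<p$, the relation $\gamma_q$ may involve $u_p^0$, but $\gamma_p$ involves no ray of any earlier block. The $r\times r$ matrix of $u^0$-coefficients is therefore unitriangular. Now take any $\gamma\in\cR_\Sigma$ and put $\gamma'=\gamma-\sum_p a_p\gamma_p$, where the integers $a_p$ are chosen inductively for $p=1,\ldots,r$ so that $\gamma'$ has zero coefficient on every $u_p^0$. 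Such a $\gamma'$ is an integral relation among the remaining rays $e_p^k$ ($k\ge1$). These form a $\ZZ$-basis of $N$, so $\gamma'=0$. Hence $\gamma\in\sum_p\ZZ\gamma_p$, and the $\gamma_p$ form a $\ZZ$-basis. Linear independence also follows from the unitriangular matrix.

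For generation of $\NEbar(X)$, we cite \cite[Remark~5]{suyama-generalized-bott}, as the paper does just before the statement. Alternatively, by Reid's description the Mori cone of a smooth projective toric variety is generated by the classes of primitive relations, and the primitive collections are exactly $P_1,\ldots,P_r$. This step needs the most care: one must check that Reid/Batyrev's result applies, namely that every extremal ray is the class of some primitive relation, so that the finite list of all primitive relations generates. Since that list here is exactly $\gamma_1,\ldots,\gamma_r$, generation follows.

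It follows that $\NEbar(X)=\sum_p\RR_{\ge0}\gamma_p$ is simplicial and that its primitive ray generators, which are the $\gamma_p$, form a $\ZZ$-basis of $\N_1(X)_\ZZ$. Each $\gamma_p$ is primitive because it is a basis element. The nef cone is the dual cone in $\Pic(X)_\RR$. Since $\Pic(X)=\Hom(\N_1(X)_\ZZ,\ZZ)$ for smooth complete toric varieties, the dual basis $L_1,\ldots,L_r$, defined by $L_i\cdot\gamma_j=\delta_{ij}$, is a $\ZZ$-basis of $\Pic(X)$. Its elements span exactly the extremal rays of $\Nef(X)$ and are primitive, so the nef cone is unimodular.
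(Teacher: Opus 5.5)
Your proof is correct and follows essentially the same route as the paper. The paper packages your inductive elimination as an isomorphism $\pi\colon\cR_\Sigma\to\ZZ^r$ that records the $u_p^0$-coefficients: it is injective because the $u_p^k$ with $k\ge1$ form a basis of $N$, and the rows $\pi(\gamma_p)$ form a unitriangular matrix. As in your argument, the paper takes Mori-cone generation from Suyama's description and obtains the nef-cone statement by passing to the dual basis.
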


\begin{proof}
The rays $u_p^k$ with $k\geq1$ form a $\ZZ$-basis of $N$.  Let
\[
f:\mathbb Z^{\Sigma(1)}
\longrightarrow
\mathbb Z^{\Sigma(1)}
\Big/
\left\langle e_{u_p^k}\mid 1\le p\le r,\ 1\le k\le n_p\right\rangle
\simeq \mathbb Z^r
\]
be the quotient map, where the last isomorphism is induced by the classes of
\(e_{u_1^0},\ldots,e_{u_r^0}\),
which form a basis of the quotient.
We denote by
$\pi:=f|_{\cR_\Sigma}:\cR_\Sigma\longrightarrow \ZZ^r$
its restriction.
Equivalently, \(\pi\) records the coefficients of
\(u_1^0,\ldots,u_r^0\) in an integral ray relation.

This map is injective: a relation in the kernel of \(\pi\) is a relation among
the rays \(u_p^k\) with \(k\ge1\), which form a
\(\mathbb Z\)-basis of \(N\);
hence the relation is zero.
It is also surjective. Indeed,
for any $(a_1,\ldots,a_r)\in\ZZ^r$,
$a_1u_1^0+\cdots+a_ru_r^0\in N$ can be uniquely expressed
as an integral linear combination of the basis vectors
$u_p^k$ with $k\ge1$.
Moving this combination to the left gives a relation whose
image under $\pi$ is $(a_1,\ldots,a_r)$.
Therefore
$ \pi:\cR_\Sigma\xrightarrow{\ \sim\ }\ZZ^r$
is an isomorphism.

In the coordinates given by $\pi$, the vector $\gamma_p$ has coefficient $1$ in the $p$-th position, coefficient $0$ in every earlier position, and possibly nonzero integer coefficients only in later positions, by the upper-triangular relation \eqref{eq:bott-relation}.
Hence the matrix with rows $\pi(\gamma_1),\ldots,\pi(\gamma_r)$ is upper triangular with diagonal entries $1$. It has determinant $1$,
so $\gamma_1,\ldots,\gamma_r$ form a $\ZZ$-basis of $\cR_\Sigma=\N_1(X)_\ZZ$.

Thus the Mori cone is generated by a lattice basis.
Its dual cone is generated by the dual basis of $\Pic(X)$,
and is therefore unimodular as well.
\end{proof}

\begin{remark}[The Bott case]\label{rem:chary}
For Bott manifolds, the integral Mori-cone statement is proved in
\cite[Theorem~4.7 and Corollary~4.8]{chary-mori-bott}.
The corresponding dual description of the nef cone is given in
\cite[Theorem~5.7]{chary-mori-bott};
see also \cite{chary-mori-bott-erratum}.

\end{remark}

\begin{corollary}[Gongyo indices on generalized Bott manifolds]\label{cor:bott-total-index}
Let $X$ be a generalized Bott manifold and let
$0\neq D\in\Nef(X)\cap\Pic(X)$.  For $1\leq p\leq r$, put
$d_p(D):=D\cdot \gamma_p$.
Then $d_p(D)\in\ZZ_{\geq0}$ and
\[
 \tau_X(D;\ZZ)=\tau_X(D;\QQ)=\sum_{p=1}^r d_p(D),
 \qquad
 \delta_X(D)=0.
\]
In particular, if $X$ is weak Fano, then
\[
 \tau_X(\ZZ)=\tau_X(\QQ)=\sum_{p=1}^r\deg(P_p),
 \qquad \delta(X)=0.
\]
\end{corollary}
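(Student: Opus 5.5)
The plan is to combine Theorem~\ref{thm:bott-unimodular} with Proposition~\ref{prop:unimodular-gap-zero}. By Theorem~\ref{thm:bott-unimodular}, the classes $\gamma_1,\ldots,\gamma_r$ form a $\ZZ$-basis of $\N_1(X)_\ZZ=\Hom(\Pic(X),\ZZ)$ and generate $\NEbar(X)$. Let $L_1,\ldots,L_r\in\Pic(X)$ be the dual basis, so that $L_p\cdot\gamma_q=\delta_{pq}$. Since the nef cone is dual to the Mori cone, it is generated by $L_1,\ldots,L_r$. These classes form a lattice basis of $\Pic(X)$, so they are exactly the primitive extremal-ray generators, and the nef cone is unimodular.

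Next I expand $D$ in this basis. For any $D\in\Pic(X)$ we have $D=\sum_p (D\cdot\gamma_p)L_p$, as one checks by pairing both sides with each $\gamma_q$. Thus the coefficient $b_p$ of Proposition~\ref{prop:unimodular-gap-zero} equals $d_p(D)$. This number is an integer because $D$ is Cartier and $\gamma_p$ is an integral curve class, and it is nonnegative because $D$ is nef and $\gamma_p\in\NEbar(X)$. Proposition~\ref{prop:unimodular-gap-zero} then gives $\tau_X(D;\ZZ)=\tau_X(D;\QQ)=\sum_p d_p(D)$ and $\delta_X(D)=0$.

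For the weak Fano case, take $D=-K_X$, which is nonzero on a projective variety of positive dimension. By the definition recalled before Theorem~\ref{thm:bott-unimodular}, $(-K_X)\cdot\gamma_p=\deg(P_p)$. Here $\gamma_p$ is the class of the primitive relation~\eqref{eq:bott-relation} of $P_p$, and its degree is $(n_p+1)-\sum c_{p,q,k}$. Substituting gives the stated formula.

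I do not expect a real obstacle, since the content is already in Theorem~\ref{thm:bott-unimodular}. The one point to handle carefully is the identification of the coordinates $b_p$ with the intersection numbers $D\cdot\gamma_p$ through the dual basis. That identification uses the perfect pairing between $\Pic(X)$ and $\N_1(X)_\ZZ$ on a smooth complete toric variety, which the paper has already noted via $\cR_\Sigma$.
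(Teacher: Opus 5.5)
Your proposal is correct and follows essentially the same route as the paper: you take the basis $L_1,\ldots,L_r$ of $\Pic(X)$ dual to $\gamma_1,\ldots,\gamma_r$, identify them as the primitive nef-ray generators via Theorem~\ref{thm:bott-unimodular}, and expand $D=\sum_p d_p(D)L_p$. You then apply Proposition~\ref{prop:unimodular-gap-zero}, with $d_p(-K_X)=\deg(P_p)$ giving the weak Fano case.
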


\begin{proof}
Let $L_1,\ldots,L_r$ be the basis of $\Pic(X)$ dual to
$\gamma_1,\ldots,\gamma_r$.
By Theorem~\ref{thm:bott-unimodular}, these are the
primitive nef-ray generators, and
\[
 D=d_1(D)L_1+\cdots+d_r(D)L_r.
\]
The assertion follows from Proposition~\ref{prop:unimodular-gap-zero}.  For
$D=-K_X$, one has $d_p(D)=\deg(P_p)$.
\end{proof}

We end this section by relating
the result to Batyrev's splitting fans.

\begin{definition}
A smooth complete fan is called a \emph{splitting fan} if any two distinct primitive collections are disjoint.
\end{definition}

\begin{proposition}[Splitting fans]\label{prop:splitting-bott}
A smooth complete toric variety has a splitting fan if and only if it is a generalized Bott manifold.  In particular, the nef cone of every smooth projective toric variety with a splitting fan is unimodular.
\end{proposition}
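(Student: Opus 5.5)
The plan has two directions, and the second assertion then follows at once from Theorem~\ref{thm:bott-unimodular}.

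\emph{Generalized Bott manifolds are splitting.} By Suyama's fan description recalled above, the primitive collections of a generalized Bott manifold are exactly the blocks $P_1,\ldots,P_r$. These blocks partition $\Sigma(1)$, so any two distinct primitive collections are disjoint and the fan is splitting.

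\emph{Splitting fans are generalized Bott manifolds.} This is essentially Batyrev's structure theorem for splitting fans, and I would either cite it or reprove it by induction on the number $r$ of primitive collections.

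\begin{enumerate}[label=\textup{(\alph*)}]
 \item First I show that the primitive collections cover $\Sigma(1)$. If a ray $v$ lay in no primitive collection, then $v$ together with any cone would again span a cone. In a complete fan this is impossible, since $-v$ lies in the interior of some cone $\sigma$ and $v$ cannot be added to $\sigma$.
 \item So $\Sigma(1)=P_1\sqcup\cdots\sqcup P_r$. A set of rays spans a cone if and only if it contains no $P_p$. Hence the maximal cones are exactly the sets obtained by omitting one element from each $P_p$.
 \item Counting rays gives $\sum_p(|P_p|-1)=\dim N$, so $|\Sigma(1)|=n+r$ and the Picard number is $r$.
 \item Next, Batyrev's lemma on primitive relations lets me choose a collection $P_p$ whose relation $\sum_{x\in P_p}x=\sum c_y y$ has right-hand side $0$ (a ``$\mathbb P$-bundle direction''). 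Then the rays of $P_p$ span a sublattice $N'$, with $N'\cap|\Sigma|$ a fan of $\PP^{|P_p|-1}$.
 \item Projecting along $N'$ yields a splitting fan with the remaining $r-1$ collections. I then identify $X$ as the projectivization of a split bundle over the corresponding variety, which is a generalized Bott manifold by induction.
\end{enumerate}

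The main obstacle is step~(d): producing a collection whose primitive relation has zero right-hand side, together with the ordering that makes the relations upper triangular as in \eqref{eq:bott-relation}. I expect this to require Batyrev's argument that, in a splitting fan, the degrees and focus structure allow such a choice. The cleanest route is to cite Batyrev (Proposition~4.1 of his classification paper, identifying splitting fans with iterated projective bundles) together with Suyama's identification of these with generalized Bott towers.

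Finally, granting the equivalence, Theorem~\ref{thm:bott-unimodular} gives unimodularity of the nef cone whenever the fan is splitting. Projectivity is automatic here, but the hypothesis ``projective'' is in any case only needed for the nef cone to be full-dimensional and strongly convex.
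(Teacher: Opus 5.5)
Your proposal is correct and is essentially the paper's argument: the equivalence is Batyrev's structure theorem for splitting fans (the paper cites his Theorem~4.3 and Corollary~4.4, not a Proposition~4.1), and the unimodularity of the nef cone then follows from Theorem~\ref{thm:bott-unimodular}. Your optional self-contained sketch is sound through step~(c), and you correctly place the difficulty in step~(d): Batyrev's general existence result for a primitive relation with zero right-hand side assumes projectivity, which here is a consequence (his Corollary~4.4) rather than a hypothesis, so falling back on the citation is the right choice.
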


\begin{proof}
The equivalence follows from Batyrev's structure theorem for splitting fans
\cite[Theorem~4.3 and Corollary~4.4]{batyrev-tohoku}.
The final assertion follows from Theorem~\ref{thm:bott-unimodular}.
\end{proof}

\section{Smooth toric varieties of small Picard number}\label{sec:small-picard}

We next combine the preceding unimodularity theorem with the classifications
of Kleinschmidt and Batyrev.  The outcome is that Picard number at most three
forces a unimodular nef cone, even without a Fano assumption.

\subsection{Picard numbers one and two}

A $d$-dimensional smooth complete toric variety $X$
of Picard number one is the $d$-dimensional projective space.
Every nonzero nef integral class is $D=bH$ with $b\in\ZZ_{>0}$,
where $H$ is the hyperplane class on $\PP^d$.
Therefore
$\tau_X(D;\ZZ)=\tau_X(D;\QQ)=b$.
For $D=-K_X=(d+1)H$, this gives
$\tau_X(\ZZ)=\tau_X(\QQ)=d+1$.

For Picard number two, Kleinschmidt's classification shows that every smooth
complete toric variety $X$
of dimension $d$ is the projectivization of a decomposable vector
bundle over a projective space \cite{kleinschmidt-few-generators}.
It may be written as
\[
 X\simeq
 \PP_{\PP^s}\!\left(
   \mathcal O\oplus\mathcal O(a_1)\oplus\cdots\oplus\mathcal O(a_r)
 \right),
 \qquad
 0\leq a_1\leq\cdots\leq a_r,
\]
where $s,r\geq1$ and $d=s+r$.  We use the convention that $\PP(E)$
parametrizes one-dimensional quotients.  Let $H$ denote the pullback of the
hyperplane class on $\PP^s$ and let $\xi=\mathcal O_X(1)$.  The classes
$H,\xi$ form the primitive nef basis, and
\[
 -K_X=(s+1-a_1-\cdots-a_r)H+(r+1)\xi.
\]
Every nonzero nef integral class is uniquely of the form
$D=bH+c\xi$ with $b,c\in\ZZ_{\geq0}$ and $(b,c)\neq(0,0)$; hence
$\tau_X(D;\ZZ)=\tau_X(D;\QQ)=b+c$.
The variety is Fano precisely when $a_1+\cdots+a_r\leq s$, and it is weak
Fano precisely when $a_1+\cdots+a_r\leq s+1$.  Whenever $-K_X$ is nef,
its Gongyo indices are
$\tau_X(\ZZ)=\tau_X(\QQ)
 =d+2-\sum_{i=1}^r a_i$.
This also follows directly from Corollary~\ref{cor:bott-total-index}, since
the fan is splitting.

\subsection{Picard number three}
Here we use Batyrev's normal form to obtain the integral lattice
statement needed for the Gongyo indices.

Let $X=X_\Sigma$ be a smooth projective toric variety with $\rho(X)=3$.
Batyrev proved that $\Sigma$ has either three or five primitive collections
\cite[Theorem~5.7]{batyrev-tohoku}.  In the three-collection case the fan is
splitting, so Theorem~\ref{thm:bott-unimodular} applies.  We now treat the
five-collection case explicitly.

Following Batyrev's notation, partition the ray generators into five nonempty
sets
\[
 X_0=\{v_1,\ldots,v_{p_0}\},\quad
 X_1=\{y_1,\ldots,y_{p_1}\},\quad
 X_2=\{z_1,\ldots,z_{p_2}\},
\]
\[
 X_3=\{t_1,\ldots,t_{p_3}\},\qquad
 X_4=\{u_1,\ldots,u_{p_4}\},
 \qquad
 p_0+\cdots+p_4=d+3.
\]
The primitive collections are $X_i\cup X_{i+1}$, with indices taken modulo
five.  Up to a symmetry of the pentagon, their primitive relations are
\begin{align*}
 \gamma_0:\quad
 \sum_{i=1}^{p_0}v_i+\sum_{i=1}^{p_1}y_i
 &=\sum_{i=2}^{p_2}c_i z_i
   +\sum_{i=1}^{p_3}(b_i+1)t_i,\\
 \gamma_1:\quad
 \sum_{i=1}^{p_1}y_i+\sum_{i=1}^{p_2}z_i
 &=\sum_{i=1}^{p_4}u_i,\\
 \gamma_2:\quad
 \sum_{i=1}^{p_2}z_i+\sum_{i=1}^{p_3}t_i
 &=0,\\
 \gamma_3:\quad
 \sum_{i=1}^{p_3}t_i+\sum_{i=1}^{p_4}u_i
 &=\sum_{i=1}^{p_1}y_i,\\
 \gamma_4:\quad
 \sum_{i=1}^{p_4}u_i+\sum_{i=1}^{p_0}v_i
 &=\sum_{i=2}^{p_2}c_i z_i
   +\sum_{i=1}^{p_3}b_i t_i,
\end{align*}
where all $b_i$ and $c_i$ are nonnegative integers; an empty sum is
understood to be zero \cite[Theorem~6.6]{batyrev-tohoku}.

\begin{proposition}[The five-collection case]\label{prop:rho3-five}
In the notation above, the classes $\gamma_0,\gamma_1,\gamma_3$ form a $\ZZ$-basis of
$\N_1(X)_\ZZ$, and
$\NEbar(X)=\RR_{\geq0}\gamma_0+\RR_{\geq0}\gamma_1+\RR_{\geq0}\gamma_3$.
Consequently, both the Mori cone and the nef cone are unimodular.  If
$L_0,L_1,L_3$ is the nef basis dual to $\gamma_0,\gamma_1,\gamma_3$,
then every nonzero nef
integral class $D$ satisfies
\[
 \tau_X(D;\ZZ)=\tau_X(D;\QQ)
 =D\cdot \gamma_0+D\cdot \gamma_1+D\cdot \gamma_3.
\]
If $X$ is weak Fano, then in particular
\begin{equation}\label{eq:rho3-index-explicit}
\begin{aligned}
 \tau_X(\ZZ)=\tau_X(\QQ)
 &=\deg(\gamma_0)+\deg(\gamma_1)+\deg(\gamma_3)\\
 &=p_0+p_1+p_2-\sum_{i=2}^{p_2}c_i-\sum_{i=1}^{p_3}b_i.
\end{aligned}
\end{equation}
\end{proposition}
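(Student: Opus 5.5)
The plan is to mimic the proof of Theorem~\ref{thm:bott-unimodular}. We choose a subset of rays forming a $\ZZ$-basis of $N$, use it to identify $\cR_\Sigma$ with $\ZZ^3$, and then read off the coordinates of $\gamma_0,\gamma_1,\gamma_3$.

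\textbf{Step 1: a lattice basis of $N$.} Since $X_0\cup X_1$ is a primitive collection and the fan is smooth, the rays outside two of the five sets should span a maximal cone. Concretely, the complement $X_2\cup X_3\cup X_4\cup(X_0\setminus\{v_1\})\cup(X_1\setminus\{y_1\})$ is a candidate. Rather than rely on this, I would choose the basis more carefully. Delete one element from each of three sets so that the remaining $d$ rays contain no primitive collection; then they span a cone of $\Sigma$ and hence form a $\ZZ$-basis. A convenient choice is to remove $v_1$, $y_1$ and $u_1$. The remaining set still contains $X_2\cup X_3$ in full, which is the collection $\gamma_2$, so that choice fails. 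Instead remove one element each from $X_0$, $X_2$ and $X_4$. The remaining sets then contain neither $X_i\cup X_{i+1}$ for any $i$: each of $X_0,X_2,X_4$ is incomplete, and every consecutive pair modulo five involves one of them. Hence the remaining rays span a cone, and since $|\Sigma(1)|=d+3$ they form a $\ZZ$-basis of $N$. As in Theorem~\ref{thm:bott-unimodular}, the map $\pi:\cR_\Sigma\to\ZZ^3$ recording the coefficients of $v_1,z_1,u_1$ is then an isomorphism.

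\textbf{Step 2: the determinant.} Take the coefficients of $v_1,z_1,u_1$ in the displayed relations. Note that $z_1$ never appears on a right-hand side, since the sums over $z$ start at $i=2$.
\begin{itemize}
\item $\gamma_0$ gives $(1,0,0)$.
\item $\gamma_1$ gives $(0,1,-1)$.
\item $\gamma_3$ gives $(0,0,1)$.
\end{itemize}
This matrix is upper triangular with determinant $1$, so $\gamma_0,\gamma_1,\gamma_3$ form a $\ZZ$-basis of $\N_1(X)_\ZZ$. For completeness I would also check $\gamma_2$ and $\gamma_4$. The vector $\gamma_2$ gives $(0,1,0)=\gamma_1+\gamma_3$, and $\gamma_4$ gives $(1,0,1)=\gamma_0+\gamma_3$. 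These can be verified as identities in $\cR_\Sigma$ by adding the relations: for example, $\gamma_1+\gamma_3$ cancels the $y$'s and $u$'s and leaves $\sum z+\sum t=0$.

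\textbf{Step 3: the Mori cone.} For smooth projective toric varieties, $\NEbar(X)$ is generated by the classes of primitive relations (Reid/Batyrev). Since $\gamma_2$ and $\gamma_4$ are nonnegative combinations of $\gamma_0,\gamma_1,\gamma_3$, we get $\NEbar(X)=\RR_{\ge0}\gamma_0+\RR_{\ge0}\gamma_1+\RR_{\ge0}\gamma_3$. This cone is generated by a lattice basis, so it is unimodular, and its dual, the nef cone, is generated by the dual basis $L_0,L_1,L_3$ of $\Pic(X)$.

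\textbf{Step 4: the indices.} Write $D=\sum(D\cdot\gamma_i)L_i$; Proposition~\ref{prop:unimodular-gap-zero} then gives the index formula. For $D=-K_X$, each $D\cdot\gamma_i=\deg(\gamma_i)$ is computed as the number of terms on the left of the relation minus the sum of the coefficients on the right:
\[
\deg(\gamma_0)=p_0+p_1-\sum_{i\ge2}c_i-\sum_i(b_i+1),\qquad
\deg(\gamma_1)=p_1+p_2-p_4,\qquad
\deg(\gamma_3)=p_3+p_4-p_1.
\]
Summing, the $p_3$ and $p_4$ terms cancel and we obtain \eqref{eq:rho3-index-explicit}.

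The main obstacle is Step 1: justifying that the chosen rays form a basis, i.e.\ that a set containing no primitive collection spans a cone of $\Sigma$. This is the defining property of primitive collections for a complete simplicial fan, so it should go through directly.
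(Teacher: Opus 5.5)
Your proposal is correct and follows the paper's proof in all essentials. The paper likewise takes a $\ZZ$-basis of $N$ consisting of all rays but three, uses the coefficients of the three omitted rays to identify $\cR_\Sigma$ with $\ZZ^3$, and finds a determinant-one matrix for $\gamma_0,\gamma_1,\gamma_3$. It then obtains $\gamma_2=\gamma_1+\gamma_3$ and $\gamma_4=\gamma_0+\gamma_3$, and concludes with Batyrev's generation theorem and Proposition~\ref{prop:unimodular-gap-zero}.

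The one difference is the choice of basis. The paper omits $y_1,z_1,u_1$ and quotes the basis property from Batyrev's normal form. You omit $v_1,z_1,u_1$ and justify the basis directly: a set of rays containing no primitive collection spans a cone of the smooth complete fan. That justification is valid and self-contained.
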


\begin{proof}
Batyrev's normal form also states that
\[
 \mathcal B=
 \{v_1,\ldots,v_{p_0},
   y_2,\ldots,y_{p_1},
   z_2,\ldots,z_{p_2},
   t_1,\ldots,t_{p_3},
   u_2,\ldots,u_{p_4}\}
\]
is a $\ZZ$-basis of $N$.  Therefore the map
$\pi:\cR_\Sigma\longrightarrow\ZZ^3$
that records the coefficients of the three omitted rays $y_1,z_1,u_1$ is an
isomorphism: arbitrary coefficients on these three rays can be canceled
uniquely by an integral combination of the basis rays in $\mathcal B$.
For the three displayed relations one has
\[
 \pi(\gamma_0)=(1,0,0),\qquad
 \pi(\gamma_1)=(1,1,-1),\qquad
 \pi(\gamma_3)=(-1,0,1).
\]
The matrix with these rows has determinant one.
Hence $\gamma_0,\gamma_1,\gamma_3$ form a
$\ZZ$-basis of $\cR_\Sigma=\N_1(X)_\ZZ$.

Moreover, the five relations satisfy
\[
 \gamma_2=\gamma_1+\gamma_3,
 \qquad
 \gamma_4=\gamma_0+\gamma_3
\]
in the relation lattice.  Since the primitive-relation classes generate the
Mori cone of a smooth projective toric variety
\cite[Theorem~2.15]{batyrev-tohoku}, the asserted description of
$\NEbar(X)$ follows.  Its generators form a lattice basis, so the Mori cone
and its dual nef cone are unimodular.

Let $L_0,L_1,L_3$ be the nef basis dual to
$\gamma_0,\gamma_1,\gamma_3$.  For a nef
integral class $D$, its coefficients in this basis are
$D\cdot \gamma_0$, $D\cdot \gamma_1$, $D\cdot \gamma_3$,
so the general formula follows from
Proposition~\ref{prop:unimodular-gap-zero}.
If $-K_X$ is nef, then
\[
 -K_X=\deg(\gamma_0)L_0+\deg(\gamma_1)L_1+\deg(\gamma_3)L_3,
\]
and the coefficients are nonnegative.
Finally,
\[
 \deg(\gamma_0)
 =p_0+p_1-\sum_{i=2}^{p_2}c_i
  -\sum_{i=1}^{p_3}b_i-p_3,
\]
\[
 \deg(\gamma_1)=p_1+p_2-p_4,
 \qquad
 \deg(\gamma_3)=p_3+p_4-p_1.
\]
Adding these equalities gives \eqref{eq:rho3-index-explicit}.
\end{proof}

\begin{theorem}[Small Picard number]\label{thm:small-picard}
Let $X$ be a smooth projective toric variety with $\rho(X)\leq3$.  Then
$\Nef(X)$ is unimodular with respect to $\Pic(X)$.  Consequently, for every
$0\neq D\in\Nef(X)\cap\Pic(X)$,
\[
 \tau_X(D;\ZZ)=\tau_X(D;\QQ),
 \qquad \delta_X(D)=0.
\]
In particular, the anticanonical Gongyo indices agree whenever $X$ is weak
Fano.
\end{theorem}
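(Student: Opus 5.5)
The plan is to split by Picard number and reduce each case to results already established. Once unimodularity of $\Nef(X)$ is known, Proposition~\ref{prop:unimodular-gap-zero} gives $\tau_X(D;\ZZ)=\tau_X(D;\QQ)$ for every nonzero nef integral class $D$. The weak Fano statement then follows by taking $D=-K_X$, which is nef and nonzero. So the whole argument comes down to proving that $\Nef(X)$ is unimodular with respect to $\Pic(X)$ for every smooth projective toric $X$ with $\rho(X)\le3$.

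\emph{Case $\rho(X)=1$.} Here $X\simeq\PP^d$ and $\Pic(X)=\ZZ H$. The nef cone is the ray $\RR_{\ge0}H$, and $H$ is a lattice basis, so the cone is unimodular.

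\emph{Case $\rho(X)=2$.} By Kleinschmidt's classification, $X$ is a projectivized split bundle over $\PP^s$. This is a generalized Bott manifold with two stages, so Theorem~\ref{thm:bott-unimodular} applies directly. Alternatively, the fan has exactly two disjoint primitive collections, hence is splitting, and Proposition~\ref{prop:splitting-bott} applies.

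\emph{Case $\rho(X)=3$.} Batyrev's theorem says the fan has either three or five primitive collections. With three collections, the count forces them to be pairwise disjoint: $d+3$ rays arranged in $\rho$ disjoint collections give a splitting fan, per Batyrev's dichotomy as quoted. Then Proposition~\ref{prop:splitting-bott} together with Theorem~\ref{thm:bott-unimodular} gives unimodularity. With five collections, Proposition~\ref{prop:rho3-five} already shows that $\gamma_0,\gamma_1,\gamma_3$ form a $\ZZ$-basis of $\N_1(X)_\ZZ$ generating the Mori cone. Dualizing, $\Nef(X)$ is generated by the dual basis $L_0,L_1,L_3$ of $\Pic(X)$, so it is unimodular.

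The only step needing care is the duality used throughout. The identification $\N_1(X)_\ZZ=\Hom(\Pic(X),\ZZ)$ holds for smooth complete toric varieties, and on a projective toric variety $\Nef(X)$ is the dual of $\NEbar(X)$. A cone generated by a lattice basis therefore has dual cone generated by the dual basis. I expect no real obstacle here, since all the substantive work is already done in Theorem~\ref{thm:bott-unimodular} and Proposition~\ref{prop:rho3-five}. The one point to state explicitly is that the three-collection case of Picard number three is splitting, which is part of Batyrev's classification as cited.
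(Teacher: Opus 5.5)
Your proposal is correct and follows the paper's proof essentially step for step. Picard number one is $\PP^d$; Picard number two uses Kleinschmidt's split-bundle description, which gives a splitting fan and hence Theorem~\ref{thm:bott-unimodular}; Picard number three uses Batyrev's three-or-five dichotomy, with the splitting case handled by Theorem~\ref{thm:bott-unimodular} and the five-collection case by Proposition~\ref{prop:rho3-five}; finally Proposition~\ref{prop:unimodular-gap-zero} gives the equality of the indices.

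The only weak point is your ``counting'' remark in the three-collection case. As written it assumes the disjointness it is meant to prove, so drop it and simply cite Batyrev for the fact that three primitive collections force a splitting fan, as the paper does.
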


\begin{proof}
The Picard-number-one and Picard-number-two cases were treated above.  If
$\rho(X)=3$, Batyrev's dichotomy gives either a splitting fan, handled by
Theorem~\ref{thm:bott-unimodular}, or the five-collection case, handled by
Proposition~\ref{prop:rho3-five}.  The assertion for every $D$ follows from
Proposition~\ref{prop:unimodular-gap-zero}.
\end{proof}

\section{Toric models and Fano lifts}\label{sec:ST}

Let $d\geq3$ and $2\leq n\leq d+1$.  Let $B_n^d$ be the blow-up of $\PP^d$
at $n$ torus-invariant points.  In \cite{sato-tsuzuki-antiflips}, smooth
projective toric models are constructed by a prescribed sequence of
anti-flips, and the existence of a smooth Fano model is determined.
Choose ray generators
\[
 x_1,\ldots,x_{d+1}\in N\simeq\ZZ^d,
 \qquad x_1+\cdots+x_{d+1}=0,
\]
with $x_1,\ldots,x_d$ a basis, and put
\[
 y_i=-x_i\qquad(1\leq i\leq n).
\]
The models in Subsections~\ref{subsec:ST-bundle}
and~\ref{subsec:ST-critical} have the same primitive ray generators
$x_1,\ldots,x_{d+1}$ and $y_1,\ldots,y_n$.

\subsection{The bundle branch: \texorpdfstring{$2n-1<d$}{2n-1<d}}\label{subsec:ST-bundle}

In this range the final model $\tB_n^d$ has primitive relations
\begin{align}
 x_i+y_i&=0 &&(1\leq i\leq n),\label{eq:ST-low-I}\\
 x_{n+1}+\cdots+x_{d+1}&=y_1+\cdots+y_n.\label{eq:ST-low-II}
\end{align}
The primitive collections are pairwise disjoint, so $\tB_n^d$ is a
$(\PP^1)^n$-bundle over $\PP^{d-n}$ and has a splitting fan.

\begin{proposition}\label{prop:ST-low}
Assume $2n-1<d$.  Then $\Nef(\tB_n^d)$ is unimodular.
Consequently,
$\delta_{\tB_n^d}(D)=0$
for every nonzero nef integral divisor $D$.
In particular,
$\tau_{\tB_n^d}(\ZZ)=\tau_{\tB_n^d}(\QQ)=d+1$.
\end{proposition}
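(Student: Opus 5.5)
Since the primitive collections $\{x_i,y_i\}$ ($1\le i\le n$) and $\{x_{n+1},\ldots,x_{d+1}\}$ are pairwise disjoint, the fan of $\tB_n^d$ is a splitting fan. Proposition~\ref{prop:splitting-bott} then identifies $\tB_n^d$ with a generalized Bott manifold, and Theorem~\ref{thm:bott-unimodular} shows that $\Nef(\tB_n^d)$ is unimodular. Proposition~\ref{prop:unimodular-gap-zero} (or Corollary~\ref{cor:bott-total-index}) gives $\delta_{\tB_n^d}(D)=0$ for every nonzero nef integral $D$. Explicitly, $\tB_n^d$ is a $(\PP^1)^n$-bundle over $\PP^{d-n}$: the blocks are $P_0=\{x_{n+1},\ldots,x_{d+1}\}$ for the base and $P_i=\{x_i,y_i\}$ for the fibers. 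This matches Suyama's upper-triangular form \eqref{eq:bott-relation}, with the base stage first and the relation \eqref{eq:ST-low-II} expressing the sum of $P_0$ in terms of later blocks.

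For the anticanonical value, the plan is to apply Corollary~\ref{cor:bott-total-index}, once $-K$ is known to be nef. The degrees are $\deg\{x_i,y_i\}=2$ for each $i\le n$ and $\deg(\text{\eqref{eq:ST-low-II}})=(d+1-n)-n=d+1-2n$. The hypothesis $2n-1<d$ is equivalent to $d+1-2n\geq1$, so all primitive relations have positive degree. Because the primitive-relation classes generate the Mori cone, $-K$ is then ample, and $\tB_n^d$ is in fact Fano. Corollary~\ref{cor:bott-total-index} now gives
\[
\tau_{\tB_n^d}(\ZZ)=\tau_{\tB_n^d}(\QQ)=2n+(d+1-2n)=d+1.
\]

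The one step needing care is confirming that the displayed relations are exactly the primitive relations of $\tB_n^d$, so that the splitting-fan criterion applies. This is asserted in the setup via \cite{sato-tsuzuki-antiflips}, and we take it as given. It is also consistent with the relations themselves: \eqref{eq:ST-low-II} follows from $x_1+\cdots+x_{d+1}=0$ and $y_i=-x_i$. The rest is bookkeeping.
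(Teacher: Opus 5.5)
Your proposal is correct and takes essentially the same route as the paper. The pairwise disjoint primitive collections give a splitting fan, hence a generalized Bott manifold with unimodular nef cone (Proposition~\ref{prop:splitting-bott} and Corollary~\ref{cor:bott-total-index}), and the anticanonical value is the degree sum $2n+(d+1-2n)=d+1$; your extra check that every primitive relation has positive degree, so that $-K$ is ample and the weak Fano hypothesis of Corollary~\ref{cor:bott-total-index} holds, is a detail the paper leaves implicit.
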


\begin{proof}
The unimodularity and the assertion for arbitrary $D$ follow from
Proposition~\ref{prop:splitting-bott} and
Corollary~\ref{cor:bott-total-index}.  The degrees of
\eqref{eq:ST-low-I} and \eqref{eq:ST-low-II} are $2$ and
$d-2n+1$, respectively, whose sum with multiplicity is
$2n+(d-2n+1)=d+1$.
\end{proof}

\subsection{The critical range in even and odd dimensions}\label{subsec:ST-critical}

Write
\[
 d=2m+\varepsilon,
 \qquad \varepsilon\in\{0,1\},
\]
and assume $2n-1\geq d$, equivalently
\[
 m+1\leq n\leq2m+\varepsilon+1.
\]
Let
\[
 Z_{m,n}^{\varepsilon}:=B_{n,m}^{2m+\varepsilon}
\]
denote the model at the critical stage of the construction in
\cite{sato-tsuzuki-antiflips}.  For
$\varepsilon=0$ this is the Fano model $\tB_n^{2m}$; for
$\varepsilon=1$ it is a smooth weak Fano model.
For $m\ge2$, we use the standard notation
\[
\widetilde V^{2m}:=Z^0_{m,2m},
\qquad
V^{2m}:=Z^0_{m,2m+1}.
\]
These are the toric pseudo-del Pezzo and del Pezzo varieties,
respectively.
The primitive relations of $Z_{m,n}^{\varepsilon}$ are as follows;
see \cite[Proposition~3.1, Lemma~3.2, and Theorem~3.3]{sato-tsuzuki-antiflips}.
\begin{align}
 x_i+y_i&=0 &&(1\leq i\leq n),\label{eq:ST-critical-I}\\
 \sum_{j\notin I}x_j&=\sum_{i\in I}y_i
 &&\bigl(I\subset\{1,\ldots,n\},\ |I|=m\bigr),
 \label{eq:ST-critical-II}\\
 \sum_{j\in J}y_j&=\sum_{\ell\notin J}x_\ell
 &&\bigl(J\subset\{1,\ldots,n\},\ |J|=m+1\bigr).
 \label{eq:ST-critical-III}
\end{align}
Complements are taken in the full set
$\{1,\ldots,2m+\varepsilon+1\}$ of $x$-indices.  The degrees are
$2$, $1+\varepsilon$, and $1-\varepsilon$, respectively.  Thus the even
model is Fano, while the odd model has precisely the degree-zero relations
in \eqref{eq:ST-critical-III}.

Write an invariant divisor as
\[
 D=\sum_{j=1}^{2m+\varepsilon+1}a_jD_{x_j}
   +\sum_{i=1}^n b_iD_{y_i},
\]
and put
\[
 q_i:=a_i+b_i\quad(1\leq i\leq n),
 \qquad
 A:=\sum_{j=1}^{2m+\varepsilon+1}a_j.
\]

\begin{lemma}[Integral coordinates]\label{lem:ST-coordinates}
The assignment $[D]\mapsto(q_1,\ldots,q_n;A)$ identifies
$\Pic(Z_{m,n}^{\varepsilon})$ with $\ZZ^{n+1}$.
\end{lemma}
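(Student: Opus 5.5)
Since the variety is smooth and complete, I would work from the exact sequence
\[
0\longrightarrow M\longrightarrow \ZZ^{\Sigma(1)}\longrightarrow \Pic(Z_{m,n}^{\varepsilon})\longrightarrow 0 .
\]
It identifies $\Pic(Z_{m,n}^{\varepsilon})$ with the cokernel of the map $M\to\ZZ^{\Sigma(1)}$, $u\mapsto(\langle u,x_j\rangle,\langle u,y_i\rangle)$. The number of rays is $2m+\varepsilon+1+n=d+1+n$, so $\Pic$ is free of rank $n+1$, matching the target $\ZZ^{n+1}$. The plan is to show that the linear map
\[
\Phi:\ZZ^{\Sigma(1)}\longrightarrow\ZZ^{n+1},\qquad (a,b)\longmapsto(q_1,\ldots,q_n;A),
\]
with $q_i=a_i+b_i$ and $A=\sum_j a_j$, is surjective with kernel exactly the image of $M$. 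It then descends to an isomorphism $\Pic\cong\ZZ^{n+1}$.

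First, $\Phi$ vanishes on principal divisors. For $u\in M$ the divisor $\operatorname{div}(\chi^u)$ has coefficients $a_j=\langle u,x_j\rangle$ and $b_i=\langle u,y_i\rangle=-\langle u,x_i\rangle$. Hence $q_i=0$, and $A=\langle u,\sum_j x_j\rangle=0$ because $x_1+\cdots+x_{d+1}=0$. So $\Phi$ induces a homomorphism $\Pic\to\ZZ^{n+1}$.

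Second, $\Phi$ is surjective. The divisors $D_{y_i}$ map to the standard vectors $e_i$, and $D_{x_{d+1}}$ maps to $e_{n+1}$; note that $d+1>n$ unless $n=d+1$. If $n=d+1$, I would instead use $D_{x_1}-D_{y_1}\mapsto(0,\ldots,0;1)$. Either way the image contains a basis.

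Third, and this is the main step, $\ker\Phi$ equals the image of $M$. Let $(a,b)\in\ker\Phi$. Then $b_i=-a_i$ for $i\le n$ and $\sum_j a_j=0$. Since $x_1,\ldots,x_d$ is a basis of $N$, there is a unique $u\in M$ with $\langle u,x_j\rangle=a_j$ for $j\le d$. Then $\langle u,x_{d+1}\rangle=-\sum_{j\le d}a_j=a_{d+1}$, and $\langle u,y_i\rangle=-a_i=b_i$. So $(a,b)=\operatorname{div}(\chi^u)$.

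An equivalent rank count is also available: $\ker\Phi$ and $M$ both have rank $d$ and $M$ is saturated, but the direct construction above already avoids any index issue. The only care needed is the case distinction for surjectivity when $n=d+1$, and this arises only for $\varepsilon=1$, $n=2m+2$. Dual coordinates are unnecessary, since the primitive relations play no role here.
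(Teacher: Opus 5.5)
Your proof is correct and follows the paper's argument: you show $(q;A)$ is invariant under principal divisors, you construct $u$ from the basis $x_1,\ldots,x_d$ to identify the kernel with the principal divisors, and you prove surjectivity (the paper does this by choosing $a_j$ with sum $A$ and setting $b_i=q_i-a_i$, which avoids your case split). One small slip: $n=d+1$ also occurs for $\varepsilon=0$, $n=2m+1$ (the case $V^{2m}$), not only for $\varepsilon=1$, $n=2m+2$, but your generator $D_{x_1}-D_{y_1}\mapsto(0,\ldots,0;1)$ works for every $n$, so nothing breaks.
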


\begin{proof}
Adding the principal divisor associated to $u\in M$ changes
$a_j$ by $\langle u,x_j\rangle$ and $b_i$ by
$-\langle u,x_i\rangle$, so the $q_i$ and $A$ are invariant
(note that $\sum_{j=1}^{2m+\varepsilon+1}x_j=0$).
If they all vanish, then $b_i=-a_i$ and
$\sum_{j=1}^{d+1} a_j=0$.
Since $x_1,\ldots,x_d$ form a $\ZZ$-basis of $N$,
there exists $u\in M$ such that
$\langle u,x_j\rangle=a_j$ for $1\le j\le d=2m+\varepsilon$.
Then
\[
\langle u,x_{d+1}\rangle
=-\sum_{j=1}^d a_j=a_{d+1},
\]
and, since $y_i=-x_i$,
\[
\langle u,y_i\rangle=-a_i=b_i.
\]
Hence $D=\operatorname{div}(\chi^u)$ is principal.
Conversely,
arbitrary integers $(q_1,\ldots,q_n;A)$ are obtained by choosing the $a_j$
with sum $A$ and putting $b_i=q_i-a_i$.
\end{proof}

\begin{lemma}[The common nef semigroup]\label{lem:ST-nef}
A class $(q_1,\ldots,q_n;A)$ is nef if and only if
\begin{align}
 q_i&\geq0 &&(1\leq i\leq n),\label{eq:ST-nef1}\\
 \sum_{i\in I}q_i&\leq A &&(|I|=m),\label{eq:ST-nef2}\\
 A&\leq\sum_{j\in J}q_j &&(|J|=m+1).\label{eq:ST-nef3}
\end{align}
Here $I$ and $J$ range over subsets of $\{1,\ldots,n\}$
of cardinalities $m$ and $m+1$, respectively.
Moreover,
\[
 -K_{Z_{m,n}^{\varepsilon}}
 =(2,\ldots,2;2m+\varepsilon+1).
\]
For fixed $m\ge2$ and $m+1\le n\le2m+1$, the even and odd
critical models have the same nef semigroup under these coordinates,
while their anticanonical classes differ.
\end{lemma}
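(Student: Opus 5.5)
We will use the standard toric criterion: on a smooth projective toric variety, a class $D$ is nef if and only if $D\cdot\gamma\ge0$ for every primitive-relation class $\gamma$. This holds because these classes generate $\NEbar(X)$ \cite[Theorem~2.15]{batyrev-tohoku}, the fact already used in the proof of Proposition~\ref{prop:rho3-five}. The plan is to compute $D\cdot\gamma$ in the coordinates $(q_1,\ldots,q_n;A)$ of Lemma~\ref{lem:ST-coordinates} for each of the three families \eqref{eq:ST-critical-I}--\eqref{eq:ST-critical-III}.

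For a relation $\sum_k x'_k=\sum_\ell c_\ell y'_\ell$ with class $\gamma$, the intersection number is $D\cdot\gamma=\sum_k c_{x'_k}-\sum_\ell c_\ell c_{y'_\ell}$, where $c_v$ is the coefficient of $D_v$ in $D$. This is the same sign convention that makes $\deg(P)=(-K)\cdot\gamma$.

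\textbf{Family \eqref{eq:ST-critical-I}.} The relation $x_i+y_i=0$ gives $D\cdot\gamma=a_i+b_i=q_i$. This yields \eqref{eq:ST-nef1}.

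\textbf{Family \eqref{eq:ST-critical-II}.} The relation $\sum_{j\notin I}x_j=\sum_{i\in I}y_i$ gives
\[
\sum_{j\notin I}a_j-\sum_{i\in I}b_i=A-\sum_{i\in I}a_i-\sum_{i\in I}b_i=A-\sum_{i\in I}q_i.
\]
This yields \eqref{eq:ST-nef2}.

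\textbf{Family \eqref{eq:ST-critical-III}.} The relation $\sum_{j\in J}y_j=\sum_{\ell\notin J}x_\ell$ gives
\[
\sum_{j\in J}b_j-\bigl(A-\sum_{j\in J}a_j\bigr)=\sum_{j\in J}q_j-A.
\]
This yields \eqref{eq:ST-nef3}.

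In each computation the sum over the complement is rewritten as $A$ minus the sum over the subset. The key point is that the complement is taken in the full index set $\{1,\ldots,2m+\varepsilon+1\}$, so the same expression in $A$ appears regardless of $\varepsilon$.

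\textbf{The anticanonical class.} Since $-K=\sum_j D_{x_j}+\sum_i D_{y_i}$, we have $a_j=b_i=1$. Hence $q_i=2$ and $A=2m+\varepsilon+1$. As a check, substituting into the three intersection numbers gives $2$, $2m+\varepsilon+1-2m=1+\varepsilon$, and $4m+... $ more precisely $2(m+1)-(2m+\varepsilon+1)=1-\varepsilon$, matching the stated degrees.

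\textbf{Common nef semigroup.} For fixed $m$ and $m+1\le n\le 2m+1$, both parities have exactly the primitive relations \eqref{eq:ST-critical-I}--\eqref{eq:ST-critical-III}, indexed by the same subsets $I,J\subset\{1,\ldots,n\}$. Together with the $\varepsilon$-independent computation above, the inequalities \eqref{eq:ST-nef1}--\eqref{eq:ST-nef3} are literally the same. The lattice $\Pic\simeq\ZZ^{n+1}$ is also the same, so the nef semigroups coincide. The anticanonical classes differ only in the entry $A$, which is $2m+1$ versus $2m+2$.

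\textbf{Main obstacle.} The only delicate point is to justify that \eqref{eq:ST-critical-I}--\eqref{eq:ST-critical-III} is the \emph{complete} list of primitive relations for both parities. This is needed so that no further inequality appears in the nef criterion. We take this completeness from the cited results of \cite{sato-tsuzuki-antiflips}. We also need the sign convention of $D\cdot\gamma$ to be consistent, which the anticanonical check above confirms.
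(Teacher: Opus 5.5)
Your proposal is correct and follows the paper's proof. You compute the intersection numbers $q_i$, $A-\sum_{i\in I}q_i$ and $\sum_{j\in J}q_j-A$ with the three primitive-relation families, invoke Batyrev's generation of the Mori cone by primitive relations, and read off $-K$ by setting every coefficient to one; only the stray ``$4m+\ldots$ more precisely'' in your anticanonical check should be deleted.
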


\begin{proof}
The intersections with the three types of primitive relations are
$q_i$, $A-\sum_{i\in I}q_i$, and
$\sum_{j\in J}q_j-A$, respectively.  These relation classes generate the
Mori cone, so their nonnegativity is equivalent to nefness.  The formula for
$-K$ follows by assigning coefficient one to every invariant prime divisor.
\end{proof}

The boundary member $n=m+1$ is unimodular in both parities.

\begin{proposition}\label{nismplusone}
For $n=m+1$, put
\[
S=\sum_{i=1}^{m+1} q_i,\qquad t=S-A,\qquad r_i=q_i-t.
\]
Then
\[
\operatorname{Nef}(Z^\varepsilon_{m,m+1})
\cap \operatorname{Pic}(Z^\varepsilon_{m,m+1})
\simeq \ZZ_{\ge0}^{m+2}
\]
with free coordinates $(t,r_1,\ldots,r_{m+1})$.
Consequently every nonzero nef integral class has zero gap.
For the anticanonical class,
\[
\bigl(
\tau_{Z^\varepsilon_{m,m+1}}(\ZZ),
\tau_{Z^\varepsilon_{m,m+1}}(\QQ)
\bigr)
=
\begin{cases}
(m+2,m+2), & \varepsilon=0,\\
(2m+2,2m+2), & \varepsilon=1.
\end{cases}
\]
\end{proposition}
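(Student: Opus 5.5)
We plan to work entirely in the integral coordinates of Lemma~\ref{lem:ST-coordinates} and the nef inequalities of Lemma~\ref{lem:ST-nef}, specialized to $n=m+1$. In this case there is exactly one subset $J=\{1,\ldots,m+1\}$ of cardinality $m+1$. There are $m+1$ subsets $I$ of cardinality $m$, namely the complements $\{1,\ldots,m+1\}\setminus\{i\}$.

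The inequalities \eqref{eq:ST-nef2}--\eqref{eq:ST-nef3} then read as follows. First, $S-q_i\le A$ for every $i$, that is, $q_i\ge S-A=t$, so $r_i\ge0$. Second, $A\le S$, that is, $t\ge0$. The remaining conditions $q_i\ge0$ of \eqref{eq:ST-nef1} are implied, since $q_i=r_i+t\ge0$.

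The map $(q_1,\ldots,q_{m+1};A)\mapsto(t,r_1,\ldots,r_{m+1})$ is unimodular. Its inverse is
\[
q_i=r_i+t,\qquad A=S-t=\sum_i r_i+mt,
\]
which is integral. Hence it carries $\Pic\simeq\ZZ^{m+2}$ isomorphically onto $\ZZ^{m+2}$, and it carries the nef semigroup onto $\ZZ_{\ge0}^{m+2}$.

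It follows that the nef cone is simplicial, and that its primitive ray generators form a lattice basis, so the cone is unimodular. Proposition~\ref{prop:unimodular-gap-zero} then gives zero gap for every nonzero nef integral class. It also shows that both indices equal the coordinate sum $t+\sum_i r_i$.

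For the anticanonical class we use $-K=(2,\ldots,2;2m+\varepsilon+1)$ from Lemma~\ref{lem:ST-nef}. Then $S=2(m+1)$, so
\[
t=2m+2-(2m+1+\varepsilon)=1-\varepsilon,\qquad r_i=2-t=1+\varepsilon.
\]
The index equals
\[
(1-\varepsilon)+(m+1)(1+\varepsilon),
\]
which is $m+2$ for $\varepsilon=0$ and $2m+2$ for $\varepsilon=1$, as claimed.

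There is no real obstacle here. The only point needing care is to confirm that the constraints \eqref{eq:ST-nef1} are redundant, so that the semigroup is exactly the orthant, and that the coordinate change has integral inverse. Both checks are immediate from the displayed formulas.
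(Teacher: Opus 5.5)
Your proposal is correct and follows the paper's proof essentially step by step. You specialize \eqref{eq:ST-nef2} and \eqref{eq:ST-nef3} to $n=m+1$ to obtain $r_i\ge0$ and $t\ge0$, invert the integral coordinate change via $q_i=t+r_i$ and $A=mt+\sum_i r_i$, apply Proposition~\ref{prop:unimodular-gap-zero}, and evaluate $t=1-\varepsilon$, $r_i=1+\varepsilon$ at $-K$. Your explicit check that $q_i\ge0$ is redundant is the one detail the paper leaves implicit in its converse step.
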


\begin{proof}
Since $n=m+1$, condition~\eqref{eq:ST-nef3} is simply
$A\le S$, hence $t\ge0$.
Moreover, every $m$-subset of $\{1,\ldots,m+1\}$
is obtained by omitting one index, so condition~\eqref{eq:ST-nef2}
is equivalent to
\[
S-q_i\le A
\qquad (1\le i\le m+1),
\]
or equivalently $t\le q_i$.
Thus $r_i=q_i-t\ge0$ for all $i$.

Conversely, if $t,r_1,\ldots,r_{m+1}\in\ZZ_{\ge0}$, then
\[
q_i=t+r_i,\qquad
A=mt+\sum_{i=1}^{m+1} r_i,
\]
and these satisfy the nef conditions of Lemma~\ref{lem:ST-nef}.
The change of coordinates and its inverse are integral linear maps.
Thus they identify the full Picard lattice with $\ZZ^{m+2}$
and the nef semigroup with $\ZZ_{\ge0}^{m+2}$.
The equality of the integral and rational Gongyo indices
for every nonzero nef integral class follows from
Proposition~\ref{prop:unimodular-gap-zero}.

For the anticanonical class one has $q_i=2$ for all $i$
and $A=2m+\varepsilon+1$.
Hence
\[
S=2m+2,\qquad
t=1-\varepsilon,
\qquad
r_i=1+\varepsilon.
\]
The stated values of the two indices follow.
\end{proof}

For $n\geq m+2$, put
\[
 r:=n-m-1\geq1,
 \qquad
 C(q;A):=\sum_{i=1}^nq_i-A.
\]
We also regard $A$ as the last-coordinate linear functional,
and abbreviate $C(q;A)$ to $C$ when the class is clear.
Taking complements in $\{1,\ldots,n\}$, condition~\eqref{eq:ST-nef3}
is equivalent to
\begin{equation}\label{eq:nef-C}
 \sum_{i\in T}q_i\le C
 \qquad(T\subset\{1,\ldots,n\},\ |T|=r).
\end{equation}

\begin{lemma}\label{lem:ST-positive-support}
If $(q;A)$ is a nonzero nef integral class and
$n\ge m+2$, then
\[
A\ge m,\qquad C\ge r.
\]
\end{lemma}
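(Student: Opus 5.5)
We need to show that $A\ge m$ and $C\ge r$ for a nonzero nef integral class $(q;A)$ with $n\ge m+2$. The plan is to use only the inequalities of Lemma~\ref{lem:ST-nef} together with the reformulation \eqref{eq:nef-C}. The key preliminary observation is that a nonzero nef class must have many positive $q_i$. Throughout, $q_i\ge0$ by \eqref{eq:ST-nef1}.

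\emph{Step 1: $A\ge0$ and some $q_i>0$.} By \eqref{eq:ST-nef2}, $A\ge\sum_{i\in I}q_i\ge0$. If all $q_i=0$, then \eqref{eq:ST-nef3} gives $A\le0$, so $A=0$ and the class is zero, a contradiction. Hence some $q_i\ge1$, and \eqref{eq:ST-nef2}, applied to an $m$-subset containing that index, gives $A\ge1$.

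\emph{Step 2: the support of $q$ has at least $m+1$ elements.} Let $P=\{i : q_i>0\}$. If $|P|\le m$, choose an $(m+1)$-subset $J$ containing $P$; this is possible since $n\ge m+2$. Choose also an $m$-subset $I\supseteq P$. Then \eqref{eq:ST-nef2} and \eqref{eq:ST-nef3} give
\[
 \sum_{i\in P}q_i\le A\le\sum_{j\in J}q_j=\sum_{i\in P}q_i,
\]
so $A=\sum_{i\in P}q_i$. This equality alone is no contradiction. Instead, take $J'$ of size $m+1$ containing $P$ but omitting a chosen index $k\in P$. This is possible when $n-1\ge m+1$ and $P\setminus\{k\}$ fits, i.e.\ $|P|-1\le m+1$, which holds. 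Then $A\le\sum_{J'}q=\sum_P q-q_k<A$, a contradiction. Hence $|P|\ge m+1$, and every $m$-subset $I\subseteq P$ gives $A\ge\sum_{i\in I}q_i\ge m$, since each $q_i$ is a positive integer.

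\emph{Step 3: $C\ge r$.} Pick an $r$-subset $T\subseteq P$. Because $|P|\ge m+1$ and $r=n-m-1$, this needs $|P|\ge r$; this does not follow automatically when $r>m+1$, i.e.\ $n>2m+2$. In the range considered, $n\le 2m+2$, so $r\le m+1\le|P|$. By \eqref{eq:nef-C}, $C\ge\sum_{i\in T}q_i\ge r$.

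The main obstacle is Step~2, the lower bound $|P|\ge m+1$ on the support of $q$. The exchange argument removing an index $k\in P$ must be checked carefully against the cardinality constraints coming from $n\ge m+2$. The remaining steps are immediate from the nef inequalities.
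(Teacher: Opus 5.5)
Your proof is correct and is essentially the paper's argument. Your exchange step, using an $(m+1)$-subset $J'\supseteq P\setminus\{k\}$ that avoids $k$ (the phrase ``containing $P$'' should read ``containing $P\setminus\{k\}$''), is the complement form of the paper's step, which first derives $C\le 0$ and then applies \eqref{eq:nef-C} to an $r$-subset containing a positive entry.

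The only real difference is Step~3. You obtain $|P|\ge r$ from $|P|\ge m+1$ together with $n\le 2m+\varepsilon+1\le 2m+2$, which is legitimate in this section. The paper instead runs the symmetric argument: assuming at most $r$ positive entries forces $A\le 0$ and hence $q=0$. This gives $|P|\ge r+1$ for every $n\ge m+2$, without using the upper bound on $n$.
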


\begin{proof}
Suppose first that at most $m$ of the $q_i$ are positive.
Choose an $m$-subset $I\subset\{1,\ldots,n\}$ containing
all indices for which $q_i>0$. Since all $q_i$ are nonnegative,
condition~\eqref{eq:ST-nef2} gives
\[
 A\ge\sum_{i\in I}q_i=\sum_{i=1}^nq_i,
 \qquad
 C=\sum_{i=1}^nq_i-A\le0.
\]
On the other hand, \eqref{eq:nef-C} gives
$\sum_{i\in T}q_i\le C$ for every $r$-subset $T$.
Since the left-hand side is nonnegative, we obtain $C=0$,
and hence every such sum is zero.
As $r\ge1$, this forces $q_i=0$ for all $i$, and then
$A=0$, contradicting the assumption that $(q;A)$ is nonzero.
Therefore at least $m+1$ of the $q_i$ are positive.

Since the $q_i$ are integers, an $m$-subset consisting of
positive entries has sum at least $m$.
Condition~\eqref{eq:ST-nef2} therefore gives $A\ge m$.

To prove $C\ge r$, suppose that at most $r$ of the $q_i$
are positive. An $r$-subset containing all positive entries
then gives $C\ge\sum_iq_i$ by \eqref{eq:nef-C}.
Since $A+C=\sum_iq_i$, this implies $A\le0$.
Condition~\eqref{eq:ST-nef2}, together with $q_i\ge0$,
forces $A=0$ and $q_i=0$ for all $i$, a contradiction.
Thus at least $r+1$ entries are positive.
An $r$-subset of positive entries has sum at least $r$,
so \eqref{eq:nef-C} gives $C\ge r$.
\end{proof}

\begin{theorem}[Anticanonical indices in the critical range]
\label{thm:ST-critical}
Assume $n\geq m+2$.
\begin{enumerate}[label=\textup{(\roman*)}]
 \item For $\varepsilon=0$ and $m\geq2$,
 \[
  \bigl(\tau_{Z_{m,n}^{0}}(\ZZ),
        \tau_{Z_{m,n}^{0}}(\QQ)\bigr)
  =\left(2,\frac{2m+1}{m}\right),
  \qquad \delta(Z_{m,n}^{0})=\frac1m.
 \]
 \item For $\varepsilon=1$,
 \[
  \bigl(\tau_{Z_{m,n}^{1}}(\ZZ),
        \tau_{Z_{m,n}^{1}}(\QQ)\bigr)=(2,2),
  \qquad \delta(Z_{m,n}^{1})=0.
 \]
\end{enumerate}
\end{theorem}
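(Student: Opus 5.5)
The plan is to get every upper bound from Corollary~\ref{cor:dual}, using the two lower bounds $A\ge m$ and $C\ge r$ of Lemma~\ref{lem:ST-positive-support}, and every lower bound from explicit decompositions into classes whose nefness is checked with Lemma~\ref{lem:ST-nef}. Throughout we use $-K_{Z^{\varepsilon}_{m,n}}=(2,\ldots,2;2m+\varepsilon+1)$.

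\emph{Case (i), $\varepsilon=0$.}
For the upper bounds, take $\lambda:=A/m$. By Lemma~\ref{lem:ST-positive-support}, every Hilbert-basis element $h_j$ has $A(h_j)\ge m$, so $\lambda(h_j)\ge1$. Corollary~\ref{cor:dual} then gives $\tau(\QQ)\le(2m+1)/m$.

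For the integral index, let $-K=\sum_j z_jh_j$ be an integral decomposition. It has $\sum_j z_j$ summands, counted with multiplicity, and each contributes at least $m$ to $A$. Hence
\[
m\sum_j z_j\le 2m+1 .
\]
Three summands would require $3m\le 2m+1$, which is false for $m\ge2$. So $\tau(\ZZ)\le2$, and this is the only place where $m\ge2$ enters.

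For the lower bounds we use three kinds of classes:
\begin{itemize}
\item $h:=(1,\ldots,1;m)$;
\item $h':=(1,\ldots,1;m+1)$;
\item $g_k:=(1,\ldots,1,0,1,\ldots,1;m)$, with the single $0$ in position $k$.
\end{itemize}
Checking \eqref{eq:ST-nef1}--\eqref{eq:ST-nef3} for each is immediate. For $g_k$, every $(m+1)$-subset sum is at least $m=A$, and every $m$-subset sum is at most $m$.

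The integral decomposition is $-K=h+h'$, so $\tau(\ZZ)\ge2$. For the rational index, set
\[
-K=\frac{2m+1-n}{m}\,h+\frac1m\sum_{k=1}^n g_k .
\]
This is an admissible decomposition because $n\le 2m+1$ makes the first coefficient nonnegative. It is correct since the $q_i$-coordinate is
\[
\frac{2m+1-n}{m}+\frac{n-1}{m}=2,
\]
and the $A$-coordinate is
\[
(2m+1-n)+n=2m+1 .
\]
Its coefficient sum is $(2m+1-n+n)/m=(2m+1)/m$. This proves (i).

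\emph{Case (ii), $\varepsilon=1$.}
For the upper bound, take $\lambda:=C/r$. By Lemma~\ref{lem:ST-positive-support}, every Hilbert-basis element satisfies $\lambda(h_j)\ge1$. Moreover
\[
C(-K)=2n-(2m+2)=2r,
\]
so Corollary~\ref{cor:dual} yields $\tau(\QQ)\le2$. For the lower bound, $-K=2h'$ with $h'=(1,\ldots,1;m+1)$ nef as above, so $\tau(\ZZ)\ge2$. Together these give $\tau(\ZZ)=\tau(\QQ)=2$.

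I expect no serious obstacle beyond finding the rational decomposition attaining $(2m+1)/m$. The guiding observation is that the dual functional $A/m$ must be tight on every summand, which forces all summands to have $A=m$. Averaging the $g_k$ over $k$ and correcting with $h$ then balances the $q$-coordinates. One should also confirm that the nef description in Lemma~\ref{lem:ST-nef} covers the odd endpoint $n=2m+2$, since Lemma~\ref{lem:ST-positive-support} is applied there as well.
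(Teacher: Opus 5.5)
Your proof is correct and is essentially the paper's argument. Your $h$, $h'$, $g_k$ are the paper's $M_-$, $M_+$, $L_k$, and the remaining ingredients coincide with the paper's: the rational decomposition $m(-K)=\sum_k L_k+(2m+1-n)M_-$, the dual functionals $A/m$ and $C/r$ justified by Lemma~\ref{lem:ST-positive-support}, and the integral decompositions $-K=M_-+M_+$ and $-K=2M_+$. Your direct count $m\sum_j z_j\le 2m+1$ is the paper's observation $\tau(\ZZ)\le\tau(\QQ)=(2m+1)/m<3$ unpacked. Your worry about the odd endpoint $n=2m+2$ is already settled, because Lemma~\ref{lem:ST-nef} is stated for every critical model, including that endpoint.
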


\begin{proof}
For $1\le i\le n$, let $L_i$ be the class with $q_i=0$,
$q_j=1$ for $j\ne i$, and $A=m$, and put
$M_-=(1,\ldots,1;m)$ and $M_+=(1,\ldots,1;m+1)$.
These are nonzero nef integral classes by Lemma~\ref{lem:ST-nef}.
In the even case ($\varepsilon=0$),
\[
 m(-K_{Z_{m,n}^{0}})
 =\sum_{i=1}^nL_i+(2m-n+1)M_-,
\]
where $2m-n+1=m-r\ge0$ since $n\le2m+1$ in the even branch.
This gives a rational decomposition of
length $(2m+1)/m$.  Lemma~\ref{lem:ST-positive-support} shows that
$\lambda_A(q;A)=A/m$ is at least one on every nonzero nef integral class,
so Corollary~\ref{cor:dual} gives the matching upper bound.
Finally,
$-K=M_-+M_+$, and $2<(2m+1)/m<3$, proving the integral value.

In the odd case ($\varepsilon=1$),
\[
 -K_{Z_{m,n}^{1}}=2M_+.
\]
Thus the integral index is at least two.
Since $C(-K)=2n-(2m+2)=2r$ and $\lambda_C(q;A)=C/r$ is
at least one on every nonzero nef
integral class by Lemma~\ref{lem:ST-positive-support}, the rational index is
at most two by Corollary~\ref{cor:dual}.  Hence both values equal two.
\end{proof}

\begin{remark}[The two-dimensional endpoint]\label{rem:m1}
Although dimension two is outside the standing assumption
$d\ge3$ of this section, the corresponding toric
pseudo-del Pezzo and del Pezzo surfaces
$\widetilde V^2$ and $V^2$ satisfy
\[
\tau(\ZZ)=\tau(\QQ)=3.
\]
$\widetilde V^2$ and $V^2$ are del Pezzo surfaces of
degree $7$ and $6$, respectively.
\end{remark}

\begin{corollary}[del Pezzo and pseudo-del Pezzo varieties]\label{cor:delpezzo}
For $m\geq2$,
\[
 \tB_{2m}^{2m}=\tV^{2m},
 \qquad
 \tB_{2m+1}^{2m}=V^{2m},
\]
and
\[
 \tau_{\tV^{2m}}(\ZZ)=\tau_{V^{2m}}(\ZZ)=2,
 \qquad
 \tau_{\tV^{2m}}(\QQ)=\tau_{V^{2m}}(\QQ)=2+\frac1m.
\]
In particular,
\[
 \delta(\tV^{2m})=\delta(V^{2m})=\frac1m.
\]
\end{corollary}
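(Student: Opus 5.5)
The corollary is essentially a specialization of Theorem~\ref{thm:ST-critical}(i), so the plan is first to justify the identifications and then to read off the indices. By the notation fixed just before the list of primitive relations, $\tV^{2m}:=Z^0_{m,2m}$ and $V^{2m}:=Z^0_{m,2m+1}$. For $\varepsilon=0$ the critical model $Z^0_{m,n}=B^{2m}_{n,m}$ is the Fano model $\tB^{2m}_n$ of \cite{sato-tsuzuki-antiflips}. So I would first confirm that $n=2m$ and $n=2m+1$ lie in the critical range $2n-1\ge d=2m$. This holds because $2n-1\ge 4m-1\ge 2m$. The upper limit $n\le 2m+1=d+1$ is also respected. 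This yields $\tB^{2m}_{2m}=\tV^{2m}$ and $\tB^{2m}_{2m+1}=V^{2m}$.

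Next I would check the hypothesis $n\ge m+2$ of Theorem~\ref{thm:ST-critical}. For $n=2m$ this is $m\ge2$, which is exactly the standing assumption. For $n=2m+1$ it holds already for $m\ge1$. Theorem~\ref{thm:ST-critical}(i), applied with $\varepsilon=0$ and $m\ge2$, then gives $\tau(\ZZ)=2$ and $\tau(\QQ)=(2m+1)/m=2+1/m$ for both varieties, and hence $\delta=1/m$.

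There is no real obstacle here. The only point needing care is the endpoint $n=2m+1$. In that case the coefficient $2m-n+1$ of $M_-$ in the decomposition used in the proof of Theorem~\ref{thm:ST-critical} vanishes, and we should confirm that the decomposition $m(-K)=\sum_i L_i$ is still admissible. It is: the $L_i$ are nonzero nef classes, and a zero coefficient simply drops out. The upper bound via $\lambda_A=A/m$ and Lemma~\ref{lem:ST-positive-support} does not depend on $n$ beyond $n\ge m+2$, so both endpoints are covered uniformly.
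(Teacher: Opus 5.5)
Your proposal is correct and is essentially the paper's proof: the identifications come from the Sato--Tsuzuki construction, and the indices are read off from Theorem~\ref{thm:ST-critical}\textup{(i)}. The paper cites the terminal cases in \cite[Remark~3.8]{sato-tsuzuki-antiflips} for the identifications, and this is what makes $Z^0_{m,2m}$ and $Z^0_{m,2m+1}$ the classical pseudo-del Pezzo and del Pezzo varieties used later with Ewald's theorem, rather than a purely notational convention. Your checks that $n\ge m+2$ holds for both values of $n$ when $m\ge2$, and that the vanishing coefficient of $M_-$ at $n=2m+1$ causes no trouble, are correct and make explicit what the paper leaves implicit.
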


\begin{proof}
The identifications are the terminal cases of the family described in
\cite[Remark~3.8]{sato-tsuzuki-antiflips}; the formulas follow from
Theorem~\ref{thm:ST-critical}.
\end{proof}

\begin{corollary}[The odd branch and its flops]\label{cor:ST-odd-flops}
Every standard odd-dimensional critical model $Z_{m,n}^{1}$ has zero
Gongyo-index gap.  The same is true for every smooth projective weak Fano
model obtained from it by a sequence of toric flops.
\end{corollary}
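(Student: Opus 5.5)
The plan is to treat the two assertions separately, deriving both from results already in the paper.

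\emph{Standard models.} A standard odd-dimensional critical model is either $Z^1_{m,m+1}$ or $Z^1_{m,n}$ with $n\ge m+2$. For $n=m+1$, Proposition~\ref{nismplusone} shows that the nef semigroup is free. Hence every nonzero nef integral class, and in particular $-K$, has zero gap. For $n\ge m+2$, Theorem~\ref{thm:ST-critical}~(ii) gives $(\tau(\ZZ),\tau(\QQ))=(2,2)$. Its proof only uses $n\le 2m+2$ to ensure $r\ge1$, and Lemma~\ref{lem:ST-positive-support} holds throughout this range, so the endpoint $n=2m+2$ is covered as well. In both cases $-K$ is nef, since the critical model is weak Fano by the degree computation following \eqref{eq:ST-critical-III}.

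\emph{Flops.} Let $X=X_0\dashrightarrow X_1\dashrightarrow\cdots\dashrightarrow X_k$ be a sequence of elementary toric flops, where each $X_i$ is a smooth projective toric weak Fano variety. I argue by induction on $k$, applying Proposition~\ref{prop:flop-invariance} with $D=-K_{X_{i}}$ at each step.

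The hypothesis $D\cdot R=0$ holds automatically, because a flopping contraction is $K$-trivial. Concretely, the primitive relation $x_1+\cdots+x_s=y_1+\cdots+y_s$ has degree $0$. The strict transform of $-K_{X_i}$ is $-K_{X_{i+1}}$, since the anticanonical divisor is the sum of all invariant prime divisors and the flop does not change the set of rays. Proposition~\ref{prop:flop-invariance} therefore gives $\tau_{X_i}(\mathbb K)=\tau_{X_{i+1}}(\mathbb K)$ for $\mathbb K\in\{\ZZ,\QQ\}$. It follows that $\delta(X_k)=\delta(X_0)=0$.

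\emph{Main obstacle.} The one point needing care is what ``sequence of toric flops'' means. Each intermediate model must be smooth and projective, and each step must be an elementary flop in the sense of Proposition~\ref{prop:flop-invariance}. I would read the statement as requiring this. I would also note that nefness of $-K$ is preserved along the sequence by the ``$A$ nef iff $A^+$ nef'' step in the proof of that proposition, so the weak Fano condition is automatic once the first model is weak Fano.
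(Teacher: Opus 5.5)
Your proposal is correct and follows the paper's proof: Proposition~\ref{nismplusone} for $n=m+1$, Theorem~\ref{thm:ST-critical}~(ii) for $n\ge m+2$, and iteration of Proposition~\ref{prop:flop-invariance} with $D=-K$. Your extra checks ($K$-triviality of the flopping ray, the strict transform of $-K$ being $-K$, and preservation of nefness) are exactly what that iteration implicitly uses. One small slip: $r=n-m-1\ge1$ comes from $n\ge m+2$, not from $n\le 2m+2$; the endpoint $n=2m+2$ is covered simply because Theorem~\ref{thm:ST-critical}~(ii) is stated for all $n\ge m+2$ in the odd range.
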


\begin{proof}
The standard models have zero gap by Proposition~\ref{nismplusone}
when $n=m+1$ and by Theorem~\ref{thm:ST-critical} when $n\ge m+2$.
The assertion for flops follows from Proposition~\ref{prop:flop-invariance}
by iteration.
\end{proof}

Thus moving among the weak Fano flop chambers cannot create a positive gap.
We next change the anticanonical class by a dimension-raising construction.

\subsection{A dimension-raising construction and the anticanonical shift}
\label{subsec:dimension-raising}

We recall the construction introduced in
\cite[Section~4 and Proposition~4.1]{sato-index-two}.
Let $X=X_\Sigma$ be a smooth projective toric $d$-fold,
and let $\G(\Sigma)$ denote the set of primitive ray generators
of $\Sigma$.
Fix $x\in \G(\Sigma)$ and an integer $p\ge2$.
Put $\widehat N=N\oplus\ZZ^{p-1}$ and identify $N$
with the first summand.
Let $z_1,\ldots,z_{p-1}$ be the standard basis vectors
of the second summand, and set
\[
z_p:=x-z_1-\cdots-z_{p-1}.
\]
The fan $\widehat\Sigma$ of the construction has
primitive ray generators
\[
\G(\widehat\Sigma)
=
(\G(\Sigma)\setminus\{x\})\cup\{z_1,\ldots,z_p\}.
\]
Its primitive collections are precisely
\[
\widehat P=
\begin{cases}
P, & x\notin P,\\
(P\setminus\{x\})\cup\{z_1,\ldots,z_p\}, & x\in P,
\end{cases}
\]
where $P$ ranges over the primitive collections of $\Sigma$.
We write $H(x,p)(X):=X_{\widehat\Sigma}$.
This is a smooth projective toric variety of dimension
$d+p-1$.

The primitive relation associated to $\widehat P$
is obtained from that of $P$ by replacing $x$
with $z_1+\cdots+z_p$, whether $x$ occurs on the left
or on the right.
Relations not involving $x$ remain unchanged.
Indeed, a cone of $\Sigma$ not containing $x$ remains a cone
of $\widehat\Sigma$. A cone containing $x$ gives a cone of
$\widehat\Sigma$ by replacing $x$ with the whole block
$z_1,\ldots,z_p$.
Apply this to the cone containing the sum of the elements of $P$
in its relative interior. A positive coefficient of $x$ on the
right becomes the same positive coefficient on every $z_i$.
Thus the substituted right-hand side still lies in the relative
interior of the corresponding cone, as required for the
primitive relation.

\begin{proposition}[Nef semigroup and anticanonical shift]\label{prop:H-shift}
Put $\widehat X=H(x,p)(X)$.
For $v\in\G(\Sigma)\setminus\{x\}$, let $\widehat D_v$
denote the corresponding invariant prime divisor on $\widehat X$.
There is a natural lattice isomorphism
\[
 \Phi_x:\Pic(X)\xrightarrow{\sim}\Pic(\widehat X)
\]
such that
\begin{gather*}
 \Phi_x([D_v])=[\widehat D_v]\quad(v\in\G(\Sigma)\setminus\{x\}),\\
 \Phi_x([D_x])=[D_{z_1}]=\cdots=[D_{z_p}],
\end{gather*}
and
\[
 \Phi_x(\Nef(X)\cap\Pic(X))
 =\Nef(\widehat X)\cap\Pic(\widehat X).
\]
Moreover,
\begin{equation}\label{eq:sato-shift}
 \Phi_x^{-1}(-K_{\widehat X})=-K_X+(p-1)[D_x].
\end{equation}
Consequently, whenever $\widehat X$ is weak Fano,
\[
 \tau_{\widehat X}(\mathbb K)
 =\tau_X(-K_X+(p-1)[D_x];\mathbb K)
 \qquad(\mathbb K\in\{\ZZ,\QQ\}).
\]
\end{proposition}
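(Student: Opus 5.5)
I will construct $\Phi_x$ from the divisor sequences and then read off nefness from the primitive relations. By the toric divisor sequence, $\Pic(X)=\ZZ^{\G(\Sigma)}/M$ and $\Pic(\widehat X)=\ZZ^{\G(\widehat\Sigma)}/\widehat M$ with $\widehat M=M\oplus\ZZ^{p-1}$. Define $\Phi_x$ on invariant divisors by $D_v\mapsto\widehat D_v$ for $v\ne x$ and $D_x\mapsto D_{z_p}$.

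\emph{Well-definedness and injectivity.} For $u\in M$, extend $u$ by zero on $\ZZ^{p-1}$ to get $\hat u\in\widehat M$. Then $\langle\hat u,z_i\rangle=0$ for $i<p$ and $\langle\hat u,z_p\rangle=\langle u,x\rangle$. Hence principal divisors go to principal divisors, and $\Phi_x$ is a well-defined homomorphism. For $w\in\ZZ^{p-1}$ (dual coordinates) we get $\operatorname{div}(\chi^w)=\sum_{i<p}w_i(D_{z_i}-D_{z_p})$, which gives $[D_{z_1}]=\cdots=[D_{z_p}]$. Surjectivity is then clear, since every generator of $\Pic(\widehat X)$ is hit. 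For injectivity, I will either construct the inverse map (sending all $D_{z_i}\mapsto D_x$ and checking that principal divisors $\operatorname{div}(\chi^{(u,w)})$ map to $\operatorname{div}(\chi^{u'})$ with $u'=u+\sum_i w_i\cdot(\text{correction})$), or simply compare ranks: both groups are free of rank $|\G(\Sigma)|-d=|\G(\widehat\Sigma)|-(d+p-1)$. Using the explicit inverse is cleaner, because it also shows the isomorphism is a lattice isomorphism and not just an injection of finite index. The inverse sends $\widehat D_v\mapsto D_v$ and $D_{z_i}\mapsto D_x$. A principal divisor $\operatorname{div}(\chi^{(u,w)})$ has coefficients $\langle u,v\rangle$ on $\widehat D_v$, $w_i$ on $D_{z_i}$ for $i<p$, and $\langle u,x\rangle-\sum w_i$ on $D_{z_p}$. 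It therefore maps to $\operatorname{div}(\chi^u)$, since the $w$ terms cancel after all $D_{z_i}$ are identified with $D_x$.

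\emph{Nef semigroups.} The Mori cone of a smooth projective toric variety is generated by the primitive-relation classes (\cite[Theorem~2.15]{batyrev-tohoku}), and the primitive relations of $\widehat\Sigma$ are those of $\Sigma$ with $x$ replaced by $z_1+\cdots+z_p$. For a divisor $D=\sum c_vD_v$, the intersection with a relation is the coefficient pairing $\sum(\text{left coefficients})c_v-\sum(\text{right coefficients})c_v$. Take $\Phi_x(D)$ represented with coefficient $c_x$ on $D_{z_p}$ and $0$ on $D_{z_1},\ldots,D_{z_{p-1}}$. The substituted relation replaces the single term $c_x$ by $0+\cdots+0+c_x$, with the same multiplicity on either side. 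So $\Phi_x(D)\cdot\widehat\gamma_P=D\cdot\gamma_P$ for every primitive collection $P$. Hence $D$ is nef if and only if $\Phi_x(D)$ is, which gives the equality of nef semigroups.

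\emph{Anticanonical class and indices.} We have $-K_{\widehat X}=\sum_{v\ne x}\widehat D_v+\sum_{i=1}^pD_{z_i}$, which is $\Phi_x\bigl(\sum_{v\ne x}D_v+pD_x\bigr)=\Phi_x\bigl(-K_X+(p-1)D_x\bigr)$. Finally, $\Phi_x$ is a lattice isomorphism carrying $\Gamma_X$ onto $\Gamma_{\widehat X}$, so it matches admissible decompositions in Definition~\ref{def:total-index} term by term. This gives $\tau_{\widehat X}(-K_{\widehat X};\mathbb K)=\tau_X(\Phi_x^{-1}(-K_{\widehat X});\mathbb K)$, where the left side is nonzero nef because $\widehat X$ is weak Fano.

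The only delicate point is the nef comparison. It relies on the stated description of the primitive collections and relations of $\widehat\Sigma$, and on their generating the Mori cone. Given the description recalled just before the proposition, this reduces to the coefficient bookkeeping above.
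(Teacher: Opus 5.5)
Your proposal is correct and follows essentially the same route as the paper. The paper also builds $\Phi_x$ from the principal divisors $\operatorname{div}(\chi^{(u,0)})$ and $\operatorname{div}(\chi^{\eta_i})=D_{z_i}-D_{z_p}$ (your explicit inverse is a repackaging of its remark that these generate all principal relations), compares nefness via $\widehat A\cdot\widehat\gamma=A\cdot\gamma$ on the substituted primitive relations, and then reads off the anticanonical shift and the equality of indices directly.
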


\begin{proof}
We use the notation in the construction above.
Let $\eta_1,\ldots,\eta_{p-1}$ be the basis of the second
summand of $\widehat M=M\oplus(\ZZ^{p-1})^\vee$
dual to $z_1,\ldots,z_{p-1}$.
Since $z_p=x-z_1-\cdots-z_{p-1}$, we have
\[
\operatorname{div}_{\widehat X}(\chi^{\eta_i})
=D_{z_i}-D_{z_p}
\qquad (1\le i\le p-1).
\]
Thus $[D_{z_1}]=\cdots=[D_{z_p}]$.
For $u\in M$, regarded as $(u,0)\in\widehat M$, we also have
\[
\operatorname{div}_{\widehat X}(\chi^{(u,0)})
=
\sum_{v\ne x}\langle u,v\rangle\widehat D_v
+\langle u,x\rangle D_{z_p},
\]
since $\langle (u,0),z_i\rangle=0$ for $1\le i\le p-1$,
and $\langle (u,0),z_p\rangle=
\langle (u,0),x-z_1-\ldots-z_{p-1}\rangle
=\langle u,x\rangle$.
These are precisely the principal relations on $X$ with
$D_x$ replaced by $D_{z_p}$.
Since $M$ and the $\eta_i$ generate $\widehat M$,
the displayed principal relations generate all principal
relations among the invariant divisors on $\widehat X$.
The toric divisor sequence therefore gives the lattice
isomorphism $\Phi_x$ with the stated values.

We next compare nefness.
Write a primitive relation of $X$, with all terms moved
to the left, as $\gamma:\sum_v b_vv=0$.
In the corresponding primitive relation $\widehat\gamma$
of $\widehat X$, every $z_i$ has coefficient $b_x$,
and the coefficients of the other rays are unchanged.
For an invariant divisor $A=\sum_v a_vD_v$ on $X$,
the class $\Phi_x([A])$ is represented by
$\widehat A=\sum_{v\ne x}a_v\widehat D_v+a_xD_{z_p}$.
Hence
\[
\widehat A\cdot\widehat\gamma
=
\sum_{v\ne x}a_vb_v+a_xb_x
=
A\cdot\gamma.
\]
Since the primitive-relation classes generate the Mori cone on each
variety, $A$ is nef if and only if $\widehat A$ is nef;
see \cite[Theorem~2.15]{batyrev-tohoku} and
\cite[Theorem~1.4 and Proposition~1.10]{cox-vonrenesse-primitive}.
Together with the lattice isomorphism, this proves the
asserted identification of the nef semigroups.

Finally, using $[D_{z_i}]=\Phi_x([D_x])$ for all $i$, we obtain
\[
\Phi_x^{-1}(-K_{\widehat X})
=
\sum_{v\ne x}[D_v]+p[D_x]
=
-K_X+(p-1)[D_x].
\]
The equality of the Gongyo indices follows by transporting
decompositions through $\Phi_x$, which preserves nonzero
nef integral classes and their coefficients.
\end{proof}

Formula~\eqref{eq:sato-shift} explains the use of Gongyo indices
with respect to arbitrary nef divisors:
$H(x,p)$ keeps the semigroup fixed and changes the target.

\subsection{Minimal raywise lifts of the odd branch}

Fix the standard odd model
\[
 Y_{m,n}:=Z_{m,n}^{1}=B_{n,m}^{2m+1}.
\]
Assign weights $\alpha_i\in\ZZ_{\geq0}$ to $y_i$ and
$\beta_j\in\ZZ_{\geq0}$ to $x_j$.  A positive weight means that we apply
$H(y_i,\alpha_i+1)$ or $H(x_j,\beta_j+1)$, respectively, to the original
ray; no operation is applied recursively to newly created rays.
Since this construction replaces each chosen ray independently
by a block of new rays whose sum is the original ray, these
operations commute up to toric isomorphism. Thus the resulting
variety depends only on the weights $\alpha_i$ and $\beta_j$,
not on the order in which the operations are performed.
We call the resulting variety a \emph{raywise lift} of $Y_{m,n}$.
Put
\begin{equation}\label{eq:lift-weights}
\begin{gathered}
 W:=\sum_{i=1}^n\alpha_i+\sum_{j=1}^{2m+2}\beta_j,
 \qquad B:=\sum_{j=1}^{2m+2}\beta_j,\\
 u_i:=\alpha_i+\beta_i\qquad(1\le i\le n).
\end{gathered}
\end{equation}
The dimension of the resulting variety is $2m+1+W$.

\begin{proposition}[Fano criterion]\label{prop:ST-lift-criterion}
The raywise lift is Fano if and only if
\begin{equation}\label{eq:lift-Fano}
 \max_{|I|=m}\sum_{i\in I}u_i
 \le B+1\le
 \min_{|J|=m+1}\sum_{j\in J}u_j,
\end{equation}
where $I,J\subset\{1,\ldots,n\}$.
\end{proposition}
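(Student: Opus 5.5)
The plan is to transport the Fano condition back to $Y_{m,n}$ through Proposition~\ref{prop:H-shift}, applied once for each ray with positive weight, and then read off ampleness from the primitive relations \eqref{eq:ST-critical-I}--\eqref{eq:ST-critical-III}.

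\emph{Step 1: reduce to a statement on $Y_{m,n}$.} Perform the operations $H(y_i,\alpha_i+1)$ and $H(x_j,\beta_j+1)$ one at a time. Each step replaces an \emph{original} ray of $Y_{m,n}$ that is still present; no new ray is ever touched. Each step therefore satisfies the hypotheses of Proposition~\ref{prop:H-shift}, and the composite of the isomorphisms $\Phi$ gives a lattice isomorphism $\Phi:\Pic(Y_{m,n})\to\Pic(\widehat X)$. Iterating \eqref{eq:sato-shift}, and using that $\Phi$ sends $[D_v]$ to the common class of the new block for each original ray $v$, I expect
\[
 \Phi^{-1}(-K_{\widehat X})
 =-K_{Y_{m,n}}+\sum_{i=1}^n\alpha_i[D_{y_i}]+\sum_{j=1}^{2m+2}\beta_j[D_{x_j}].
\]

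\emph{Step 2: ampleness is preserved.} I also need that ampleness, and not just nefness, corresponds under $\Phi$. The proof of Proposition~\ref{prop:H-shift} shows that the primitive relations of $\widehat X$ are in bijection with those of $Y_{m,n}$, and that $\widehat A\cdot\widehat\gamma=A\cdot\gamma$ for each corresponding pair. The primitive relation classes generate the Mori cone on both varieties \cite[Theorem~2.15]{batyrev-tohoku}. Hence $\widehat X$ is Fano if and only if the class above has strictly positive intersection with every primitive relation of $Y_{m,n}$. This is the main point to state carefully; it uses the same references as the nefness comparison, together with Kleiman's criterion on a projective toric variety.

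\emph{Step 3: compute in the coordinates of Lemma~\ref{lem:ST-coordinates}.} The relevant classes are:
\begin{itemize}
 \item $D_{y_i}$ has $q_i=1$ and all other coordinates $0$;
 \item $D_{x_j}$ has $A=1$, and also $q_j=1$ when $j\le n$;
 \item $-K_{Y_{m,n}}=(2,\ldots,2;2m+2)$ by Lemma~\ref{lem:ST-nef}.
\end{itemize}
So the shifted class is $(2+u_1,\ldots,2+u_n;\,2m+2+B)$, with $u_i$ and $B$ as in \eqref{eq:lift-weights}.

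The intersection numbers computed in the proof of Lemma~\ref{lem:ST-nef} then give three strict inequalities:
\begin{itemize}
 \item $2+u_i>0$, which holds automatically;
 \item $(2m+2+B)-\sum_{i\in I}(2+u_i)>0$ for $|I|=m$, i.e.\ $\sum_{i\in I}u_i<B+2$;
 \item $\sum_{j\in J}(2+u_j)-(2m+2+B)>0$ for $|J|=m+1$, i.e.\ $\sum_{j\in J}u_j>B$.
\end{itemize}
All quantities are integers, so the last two inequalities are exactly $\sum_{I}u_i\le B+1\le\sum_J u_j$. Taking the maximum over $I$ and the minimum over $J$ gives \eqref{eq:lift-Fano}.
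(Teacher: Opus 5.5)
Your proposal is correct and follows essentially the same route as the paper. Both arguments transport the anticanonical class through the (iterated) isomorphism of Proposition~\ref{prop:H-shift} to get $(2+u_1,\ldots,2+u_n;\,2m+2+B)$, impose strict versions of the nef inequalities of Lemma~\ref{lem:ST-nef}, and use integrality to reach \eqref{eq:lift-Fano}. The only difference is cosmetic: you justify ampleness by strict positivity on the primitive relation classes, whereas the paper says the class lies in the interior of the transported nef cone.
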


\begin{proof}
By Proposition~\ref{prop:H-shift}, the nef cone is still described by
Lemma~\ref{lem:ST-nef}. Using \eqref{eq:sato-shift} and
\eqref{eq:lift-weights}, the shifted anticanonical class is
\begin{equation}\label{eq:lift-target}
(2+u_1,\ldots,2+u_n;\,2m+2+B).
\end{equation}
The variety is Fano if and only if this class lies in the
interior of the nef cone.
The inequalities $q_i>0$ are automatic.
For every $m$-subset $I$, condition~\eqref{eq:ST-nef2} becomes
\[
\sum_{i\in I}(2+u_i)<2m+2+B,
\]
which, since all quantities are integral, is equivalent to
\[
\sum_{i\in I}u_i\le B+1.
\]
Similarly, for every $(m+1)$-subset $J$, condition~\eqref{eq:ST-nef3}
becomes
\[
2m+2+B<\sum_{j\in J}(2+u_j),
\]
or equivalently
\[
B+1\le\sum_{j\in J}u_j.
\]
Taking the maximum over all $m$-subsets $I$ and the minimum
over all $(m+1)$-subsets $J$ gives the stated criterion.
\end{proof}

The case $n=m+1$ is exceptional.  Applying $H(y_c,2)$
for one index $c$ gives
\[
u_c=1,\qquad u_i=0\ (i\ne c),\qquad B=0.
\]
Hence both sides of the Fano criterion in Proposition~\ref{prop:ST-lift-criterion}
are equal to $1$, so the resulting variety is Fano.
Its anticanonical class has coordinates
\[
(q;A)=(q_1,\ldots,q_{m+1};2m+2),
\]
where $q_c=3$ and $q_i=2$ for $i\ne c$.
In the free coordinates of Proposition~\ref{nismplusone} this becomes
\[
t=1,\qquad r_c=2,\qquad r_i=1\quad(i\ne c).
\]
Therefore
\[
\tau(\ZZ)=\tau(\QQ)=m+3
\]
by Proposition~\ref{prop:unimodular-gap-zero}.
We henceforth assume $n\ge m+2$.

\begin{theorem}[Minimal Fano lift]\label{thm:ST-minimal-lift}
Assume $m+2\leq n\leq2m+2$.  Among raywise lifts of the standard model
$Y_{m,n}$, the total weight of a Fano lift satisfies
\[
 W\geq n-1.
\]
Equality holds exactly as follows, up to permuting
the first $n$ indices.
Choose $T\subset\{2,\ldots,n\}$ with $|T|=m-1$,
and put $S=\{2,\ldots,n\}\setminus T$, so $|S|=n-m$.
Apply
\[
H(x_i,2)\quad(i\in T),
\qquad
H(y_j,2)\quad(j\in S),
\]
and leave all other rays unchanged.
The resulting variety,
denoted $\widehat Y_{m,n}$, is a smooth toric Fano variety of dimension
\[
 \dim\widehat Y_{m,n}=2m+n.
\]
\end{theorem}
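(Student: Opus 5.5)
The plan is to reduce everything to the Fano criterion of Proposition~\ref{prop:ST-lift-criterion}, which is purely combinatorial in the integers $u_1,\ldots,u_n$ and $B$. Two elementary observations drive the argument. First, $B=\sum_{j=1}^{2m+2}\beta_j\ge\sum_{i=1}^n\beta_i$, so
\[
 W=\sum_{i=1}^n\alpha_i+B\ \ge\ \sum_{i=1}^n\alpha_i+\sum_{i=1}^n\beta_i=\sum_{i=1}^n u_i=:s ,
\]
with equality if and only if $\beta_j=0$ for all $j>n$. Second, sort the $u_i$ increasingly as $u_{(1)}\le\cdots\le u_{(n)}$. Then \eqref{eq:lift-Fano} says exactly
\[
 u_{(n-m+1)}+\cdots+u_{(n)}\ \le\ B+1\ \le\ u_{(1)}+\cdots+u_{(m+1)} .
\]
In particular $u_{(1)}+\cdots+u_{(m+1)}\ge1$.

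I will split into two cases according to $u_{(1)}$. If $u_{(1)}\ge1$, then every $u_i\ge1$, so $W\ge s\ge n>n-1$; thus such lifts are never extremal.

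If $u_{(1)}=0$, the right-hand sum becomes $u_{(2)}+\cdots+u_{(m+1)}$. The key step, which I expect to be the only real point, is a comparison of two sums. Since $r=n-m-1\ge1$, the indices $n-m+1,\ldots,n$ are the indices $2,\ldots,m+1$ shifted up by $r$. Hence $u_{(k+r)}\ge u_{(k)}$ termwise for $2\le k\le m+1$, giving
\[
 u_{(n-m+1)}+\cdots+u_{(n)}\ \ge\ u_{(2)}+\cdots+u_{(m+1)} .
\]
Combined with the displayed chain, all these inequalities become equalities. So $u_{(k+r)}=u_{(k)}$ for each such $k$, and chaining these equalities along steps of $r$ forces $u_{(2)}=\cdots=u_{(n)}=c$. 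We also get $B+1=mc$, so $c\ge1$. Therefore $W\ge s=(n-1)c\ge n-1$, which proves the bound.

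For the equality case, $W=n-1$ forces the second case with $c=1$ and $s=W$. After permuting indices, this means $u_1=0$ and $u_i=1$ for $2\le i\le n$, together with $\beta_j=0$ for $j>n$ and $B=m-1$. Since each $u_i\in\{0,1\}$ for $i\ge2$ equals $\alpha_i+\beta_i$, exactly one of $\alpha_i,\beta_i$ equals $1$. Moreover $B=m-1$ says precisely $m-1$ of these come from $\beta$; these form the set $T$, and the remaining $n-m$ indices with $\alpha_i=1$ form $S$. Weight $1$ means the operation $H(\cdot,2)$, which gives exactly the stated description.

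Conversely, for such data the maximal $m$-sum is $m=B+1$. The minimal $(m+1)$-sum is also $m$, attained by including $u_1=0$. Hence \eqref{eq:lift-Fano} holds, and the lift is Fano by Proposition~\ref{prop:ST-lift-criterion}. It is smooth and projective by construction, and its dimension is $2m+1+W=2m+n$. The hypothesis $n\le 2m+2$ only guarantees that we are within the range of standard models (and $|S|=n-m\ge2$); the argument itself uses only $n\ge m+2$.
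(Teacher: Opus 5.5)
Your proof is correct and follows the paper's proof in all essentials: you reduce to the Fano criterion of Proposition~\ref{prop:ST-lift-criterion}, sort the $u_i$, show that at most one $u_i$ vanishes, use $W\ge\sum_{i\le n}u_i$ with equality iff $\beta_j=0$ for $j>n$, and read off $B=m-1$ and the sets $T,S$ in the equality case. The paper instead gets a contradiction from $u_1=u_2=0$ via the $m$-subset $\{3,\ldots,m+1,n\}$, whereas your shift-by-$r$ comparison gives the stronger rigidity $u_{(2)}=\cdots=u_{(n)}=c$ with $B+1=mc$; note that this step really uses monotonicity, $u_{(k)}\le u_{(k+1)}\le\cdots\le u_{(k+r)}=u_{(k)}$, and not merely chaining. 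The one point you omit, which the paper supplies, is that $\widehat Y_{m,n}$ is well defined: simultaneous permutations of $x_i,y_i$ ($1\le i\le n$) are lattice automorphisms of the fan of $Y_{m,n}$, and those fixing the index $1$ act transitively on the $(m-1)$-subsets $T\subset\{2,\ldots,n\}$, so all choices give isomorphic varieties.
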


\begin{proof}
Consider a Fano lift.
After permuting the first $n$ indices and relabeling
the rays and weights accordingly, we may assume that
$u_1\le\cdots\le u_n$.
The Fano criterion~\eqref{eq:lift-Fano} gives
\[
\sum_{i=n-m+1}^n u_i
\le B+1
\le \sum_{i=1}^{m+1}u_i.
\]
Since $B\ge0$, not all $u_i$ are zero, so $u_n>0$.

Suppose that $u_1=u_2=0$.
Since $n\ge m+2$, the indices $3,\ldots,m+1,n$
are distinct and form an $m$-subset.
Hence
\[
\sum_{i=n-m+1}^n u_i
\ge \sum_{i=3}^{m+1}u_i+u_n
> \sum_{i=1}^{m+1}u_i,
\]
contradicting the Fano criterion.
Here the sum from $3$ to $m+1$ is empty when $m=1$.
Thus at most one $u_i$ is zero.
Since the $u_i$ are nonnegative integers, we obtain
\[
W
=\sum_{i=1}^n u_i+\sum_{j=n+1}^{2m+2}\beta_j
\ge \sum_{i=1}^n u_i
\ge n-1.
\]

Suppose now that $W=n-1$.
Equality in the preceding inequalities forces
\[
u_1=0,\qquad
u_i=1\quad(2\le i\le n),\qquad
\beta_j=0\quad(j>n).
\]
The Fano criterion therefore reduces to
$m\le B+1\le m$, so $B=m-1$.
Since $u_1=\alpha_1+\beta_1=0$, we have
$\alpha_1=\beta_1=0$.
For every $2\le i\le n$, the equality
$\alpha_i+\beta_i=1$ implies that exactly one of
$\alpha_i,\beta_i$ is $1$ and the other is $0$.
Put
\[
T=\{i\in\{2,\ldots,n\}\mid\beta_i=1\},
\qquad
S=\{2,\ldots,n\}\setminus T.
\]
Since $\beta_j=0$ for $j>n$, $|T|=B=\sum_j\beta_j=m-1$ and $|S|=n-m$.
Thus the operations are exactly
$H(x_i,2)$ for $i\in T$ and $H(y_j,2)$ for $j\in S$,
with no operation on the remaining rays.
This is the stated construction.

Conversely, this construction gives
$u_1=0$, $u_i=1$ for $2\le i\le n$, and $B=m-1$.
Both inequalities in the Fano criterion hold with equality,
so the resulting variety is Fano.
Its total weight is $W=n-1$.
There are $n-1$ operations, each of which increases
the dimension by one, and hence
\[
\dim\widehat Y_{m,n}
=(2m+1)+(n-1)
=2m+n.
\]
Permutations of $x_1,\ldots,x_n$, together with the corresponding
$y_i$, are induced by lattice automorphisms of the original fan.
In particular, permutations fixing the first index act transitively
on the subsets $T\subset\{2,\ldots,n\}$ of size $m-1$.
The constructions therefore give isomorphic toric varieties,
so $\widehat Y_{m,n}$ is well defined up to toric isomorphism.
\end{proof}

\begin{remark}\label{rem:iterated-sato}
The word ``minimal'' refers to lifts of the standard model $Y_{m,n}$.
It does not compare constructions performed after a preliminary flop.
Operations on newly created rays do not enlarge the class of lifts:
if $p,q\ge2$ and $z_i$ belongs to the block replacing $x$
in $H(x,p)(X)$, then
\[
 H(z_i,q)\bigl(H(x,p)(X)\bigr)\simeq H(x,p+q-1)(X).
\]
To see this, permute the new block so that $i=p$ and write
$z_p=x-z_1-\cdots-z_{p-1}$.
If $t_1,\ldots,t_{q-1}$ are the additional basis vectors,
the second construction replaces $z_p$ by these vectors and
\[
 t_q=x-z_1-\cdots-z_{p-1}-t_1-\cdots-t_{q-1}.
\]
These are exactly the generators of a single block of size $p+q-1$.
The two successive replacements of primitive collections agree
with the single replacement, and the lattice bases agree as well.
Permuting a block is a unimodular change of basis fixing $N$.
Thus the isomorphism also holds for any $i$.
By iteration, all descendants of a given original ray can be
grouped into one block, with the same total dimension increase.

Consequently, restricting attention to operations on the original rays
causes no loss of generality among iterated applications of this
construction without intervening flops.
\end{remark}

\begin{theorem}[Gongyo indices of the minimal lift]
\label{thm:ST-lift-indices}
Let $m\ge1$ and $m+2\le n\le2m+2$, and let
$\widehat Y_{m,n}$ be the minimal Fano lift in
Theorem~\ref{thm:ST-minimal-lift}.
Then
\[
\bigl(
\tau_{\widehat Y_{m,n}}(\ZZ),
\tau_{\widehat Y_{m,n}}(\QQ)
\bigr)
=
\begin{cases}
(4,4),
  & (m,n)=(1,3),\\[2pt]
\left(3,3+\dfrac1m\right),
  & m\ge2,\quad m+2\le n\le2m+1,\\[6pt]
\left(3,3+\dfrac1{m+1}\right),
  & n=2m+2.
\end{cases}
\]
In particular, every such minimal lift has positive
Gongyo-index gap except when $(m,n)=(1,3)$.
\end{theorem}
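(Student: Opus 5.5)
By Proposition~\ref{prop:H-shift}, I will replace $\widehat Y_{m,n}$ by the pair $(Y_{m,n},D)$, where $Y_{m,n}$ carries the common nef semigroup of Lemma~\ref{lem:ST-nef}. Substituting the weights of Theorem~\ref{thm:ST-minimal-lift} into \eqref{eq:lift-target} ($u_1=0$, $u_i=1$ for $i\ge2$, $B=m-1$) gives the target
\[
D=(2,3,\ldots,3;\,3m+1).
\]
Put $r=n-m-1$. Then $A(D)=3m+1$ and $C(D)=2+3(n-1)-(3m+1)=3r+1$. Both indices are then computed entirely inside the nef semigroup of Lemma~\ref{lem:ST-nef}, using the classes $L_i$, $M_-$, $M_+$ from the proof of Theorem~\ref{thm:ST-critical}.

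\emph{Upper bounds.} Lemma~\ref{lem:ST-positive-support} shows that $\lambda_A=A/m$ and $\lambda_C=C/r$ are at least $1$ on every nonzero nef integral class. By Corollary~\ref{cor:dual},
\[
\tau(\QQ)\le\min\Bigl\{3+\frac1m,\;3+\frac1r\Bigr\}.
\]
For $n\le2m+1$ we have $r\le m$, so the minimum is $3+1/m$. For $n=2m+2$ we have $r=m+1$, so the minimum is $3+1/(m+1)$. For $(m,n)=(1,3)$ both bounds equal $4$.

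For the integral index, the same lemma handles four or more integral summands. When $m\ge2$, their $A$-values sum to at least $4m>3m+1$. When $n=2m+2$, their $C$-values sum to at least $4(m+1)>3r+1$. Either way this is impossible, so $\tau(\ZZ)\le3$ outside $(m,n)=(1,3)$.

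\emph{Lower bounds.} The integral decomposition will be $D=L_1+M_-+M_+$, which has three summands. The nefness of $L_1$, $M_-$, $M_+$ for $n\ge m+2$ was already checked in Theorem~\ref{thm:ST-critical}.

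For $n\le 2m+1$, I use the identity $m(M_-+M_+)=\sum_iL_i+(2m-n+1)M_-$, taken from that proof, to eliminate $M_+$. This gives
\[
D=L_1+\frac1m\sum_{i=1}^nL_i+\frac{2m-n+1}{m}M_-,
\]
whose coefficients are nonnegative and sum to $3+1/m$.

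For $n=2m+2$, I look for a decomposition $D=aL_1+b\sum_iL_i+cM_+$, using only summands with $C=r$. Matching the coordinates gives $a=1$, $b=c=1/(m+1)$, with total $3+1/(m+1)$.

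For $(m,n)=(1,3)$, the target is $D=(2,3,3;4)$ and the nef conditions read $q_i\le A\le q_j+q_k$. An explicit integral decomposition is
\[
D=2(0,1,1;1)+(1,1,0;1)+(1,0,1;1),
\]
and each of these summands is nef. So $\tau(\ZZ)=4$, matching the rational bound.

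\emph{Expected difficulty.} The only non-mechanical step is the rational lower bound for $n=2m+2$. There $\lambda_A$ is not sharp, so the decomposition must use only classes on the face $C=r$, which rules out $M_-$. I would first try the three-parameter ansatz above, in which all the $q_i$ with $i\ge2$ are treated symmetrically, and verify that its solution has nonnegative coefficients. The remaining verifications, namely nefness of the few explicit classes and the arithmetic of $A$ and $C$, are routine.
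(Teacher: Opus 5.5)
Your proposal is correct and follows the paper's proof essentially step by step. It uses the same target $(2,3,\ldots,3;3m+1)$, the same decompositions $L_1+M_-+M_+$, $L_1+\frac1m\sum_i L_i+\frac{2m-n+1}{m}M_-$, $L_1+\frac1{m+1}\bigl(\sum_i L_i+M_+\bigr)$ and $2L_1+L_2+L_3$, and the same dual functionals $A/m$ and $C/(m+1)$. The only cosmetic difference is that you prove $\tau(\ZZ)\le3$ by testing four integral summands against $A$ or $C$ directly, whereas the paper deduces it from $3<\tau(\QQ)<4$.
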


\begin{proof}
We use the coordinates $(q;A)$ of $Y_{m,n}$ and
the identification of nef semigroups induced by
Proposition~\ref{prop:H-shift}.
By Theorem~\ref{thm:ST-minimal-lift}, we may take
$u_1=0$, $u_i=1$ for $2\le i\le n$, and $B=m-1$.
Thus, by \eqref{eq:lift-target}, the anticanonical class of
$\widehat Y_{m,n}$ corresponds to
\begin{equation}\label{eq:minimal-target}
 b=(2,3,\ldots,3;\,3m+1).
\end{equation}
For each $1\le i\le n$, let $L_i$ be the class with
$q_i=0$, $q_j=1$ for $j\ne i$, and $A=m$.
Put
\[
M_-=(1,\ldots,1;m),
\qquad
M_+=(1,\ldots,1;m+1).
\]
These are nonzero nef integral classes by Lemma~\ref{lem:ST-nef}.
Since
\[
b=M_-+M_++L_1,
\]
we have $\tau_{\widehat Y_{m,n}}(\ZZ)\ge3$.
We also note that
\[
\sum_{i=1}^nL_i=(n-1,\ldots,n-1;\,nm).
\]

Suppose first that $n\le2m+1$.
Comparison with the coordinates in \eqref{eq:minimal-target} gives
\[
b=\frac1m\sum_{i=1}^nL_i
  +\frac{2m-n+1}{m}M_-+L_1.
\]
All coefficients are nonnegative, and their sum is
$n/m+(2m-n+1)/m+1=3+1/m$.
On the other hand, Lemma~\ref{lem:ST-positive-support}
gives $A\ge m$
for every nonzero nef integral class.
Thus Corollary~\ref{cor:dual}, applied to the linear functional
$A/m$, gives
\[
\tau_{\widehat Y_{m,n}}(\QQ)
\le\frac{A(b)}m
=\frac{3m+1}{m}.
\]
Hence $\tau_{\widehat Y_{m,n}}(\QQ)=3+1/m$.

Suppose next that $n=2m+2$.
In this case, comparison of coordinates gives
\[
b=\frac1{m+1}\sum_{i=1}^nL_i
  +\frac1{m+1}M_++L_1.
\]
The coefficient sum is $(n+1)/(m+1)+1=3+1/(m+1)$.
Recall that $C=\sum_iq_i-A$.
Lemma~\ref{lem:ST-positive-support} now gives $C\ge m+1$
for every nonzero nef integral class, while
\[
C(b)=2+3(n-1)-(3m+1)=3m+4.
\]
Applying Corollary~\ref{cor:dual} to $C/(m+1)$ therefore gives
\[
\tau_{\widehat Y_{m,n}}(\QQ)
\le\frac{3m+4}{m+1}
=3+\frac1{m+1}.
\]
Thus equality holds.

Except when $(m,n)=(1,3)$, the rational index is
strictly between $3$ and $4$.
Since the integral index is an integer and is at least
$3$, it equals $3$.
When $(m,n)=(1,3)$, the first rational decomposition
above becomes
\[
b=2L_1+L_2+L_3,
\]
an integral decomposition of coefficient sum four.
Hence both indices are four in this case.
\end{proof}

\begin{example}[A six-dimensional positive-gap example]
The smallest-dimensional positive-gap member of the
family in Theorem~\ref{thm:ST-lift-indices} is $\widehat Y_{1,4}$.
Start with $Y_{1,4}=B^3_{4,1}$, the blow-up of $\PP^3$
at its four torus-fixed points, and apply $H(y_i,2)$
for $i=2,3,4$.
For each such $i$, denote the two primitive ray generators
replacing $y_i$ by $z_i,w_i$, so that $z_i+w_i=y_i$.
The resulting sixfold has primitive ray generators
\[
x_1,\ldots,x_4,\ y_1,\ z_2,w_2,\ z_3,w_3,\ z_4,w_4.
\]
Its fourteen primitive relations are
\[
\begin{aligned}
x_1+y_1&=0,\\
x_i+z_i+w_i&=0
  &&(2\le i\le4),\\
x_2+x_3+x_4&=y_1,\\
\sum_{\substack{1\le j\le4\\j\ne i}}x_j
  &=z_i+w_i
  &&(2\le i\le4),\\
y_1+z_i+w_i
  &=\sum_{j\in\{2,3,4\}\setminus\{i\}}x_j
  &&(2\le i\le4),\\
z_i+w_i+z_j+w_j&=x_1+x_k
  &&(\{i,j,k\}=\{2,3,4\},\ i<j).
\end{aligned}
\]
These are obtained from the primitive relations of
$Y_{1,4}$ by the replacements $y_i=z_i+w_i$
for $i=2,3,4$.
All of them have positive degree.
By Theorem~\ref{thm:ST-lift-indices},
\[
\tau_{\widehat Y_{1,4}}(\ZZ)=3,
\qquad
\tau_{\widehat Y_{1,4}}(\QQ)=\frac72,
\qquad
\delta(\widehat Y_{1,4})=\frac12.
\]
\end{example}

\section{Pseudo-symmetric varieties and the main theorem}\label{sec:pseudosym}

A smooth toric Fano variety $X=X_\Sigma$ of dimension $d$ is called \emph{pseudo-symmetric} if its fan contains a pair of centrally symmetric maximal cones: there exists $\sigma\in\Sigma(d)$ such that $-\sigma\in\Sigma(d)$.
Ewald's structure theorem \cite{ewald-fano}, in the form of
\cite[Theorem~6.6]{sato-towardfano}, gives a product decomposition
of this class.

\begin{theorem}[Ewald]\label{thm:ewald}
Let $X$ be a pseudo-symmetric smooth toric Fano variety.
Then there exist nonnegative integers $s,a,b,p,q$ and integers
$m_\alpha\geq2$ for $1\leq\alpha\leq p$ and
$n_\beta\geq2$ for $1\leq\beta\leq q$ such that
\[
 X\simeq(\PP^1)^s\times(V^2)^a\times(\tV^2)^b
 \times\prod_{\alpha=1}^p V^{2m_\alpha}
 \times\prod_{\beta=1}^q\tV^{2n_\beta}.
\]
\end{theorem}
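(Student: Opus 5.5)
The plan is to follow Ewald's original argument in the form given in \cite[Theorem~6.6]{sato-towardfano}, which works entirely with the set $\G(\Sigma)$ of primitive ray generators. Fix a maximal cone $\sigma$ with $-\sigma\in\Sigma(d)$, and let $e_1,\ldots,e_d$ be its generators. Smoothness makes them a $\ZZ$-basis of $N$, and $-e_1,\ldots,-e_d$ then generate $-\sigma$. The first step is to exploit the Fano condition. Every $v\in\G(\Sigma)$ is a vertex of the reflexive polytope $\operatorname{conv}\G(\Sigma)$. The facet of the dual polytope corresponding to $\sigma$ is given by $\sum e_i^*=-1$, and the one corresponding to $-\sigma$ by $\sum e_i^*=1$. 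Hence every generator $v=\sum c_ie_i$ satisfies $-1\le\sum c_i\le1$. Applying the same bound for every maximal cone in a neighbourhood of $\sigma$ and of $-\sigma$ will confine the coordinates of $v$ to $\{-1,0,1\}$.

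The second step classifies the remaining generators using primitive collections and relations. Each generator not in $\pm\sigma$ is a sum of some $e_i$'s minus a sum of others, with the two sums of equal size or differing by one. I then build a graph on the indices $\{1,\ldots,d\}$ by joining indices that occur together in some generator. The aim is to show three things: the fan is the product of the fans attached to the connected components; a component of size one gives $\PP^1$; and a component of size $2m$ or $2m+1$ (in the appropriate normalization) forces exactly the generators of $V^{2m}$ or $\tV^{2m}$. For components of size two this yields $V^2$ and $\tV^2$.

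The product splitting should follow from the fact that a generator meeting two components would connect them. It also uses the fact that cones of a complete smooth fan lying in the coordinate subspaces are compatible; Batyrev's description of the fan by primitive collections \cite{batyrev-tohoku} handles the combinatorics.

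I expect the main obstacle to be the local classification of a connected component. Here one must show that the only generators are the $x$'s and $y=-x$'s of the del Pezzo or pseudo-del Pezzo configuration, that is, of the form $\pm(e_{i_1}+\cdots)$ with a specific shape. One must also show that the fan structure is then forced. I would first use convexity of the Fano polytope: two generators whose sum is another lattice vector in the polytope's interior violate reflexivity. This reduces everything to vectors $\pm(e_i-e_j)$-type after a unimodular change of basis, and then compare with the explicit primitive relations~\eqref{eq:ST-critical-I}--\eqref{eq:ST-critical-III} to identify $Z^0_{m,2m}$ and $Z^0_{m,2m+1}$. Since the statement is cited from \cite{ewald-fano} and \cite{sato-towardfano}, in the paper itself it suffices to invoke those references, and the sketch above is only a roadmap to that proof.
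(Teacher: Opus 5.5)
The paper gives no proof of this statement. It quotes Ewald's structure theorem in the form of Theorem~6.6 of Sato's classification paper, which is exactly what your last sentence proposes. As far as this paper is concerned, the citation is the whole argument, and your proposal is acceptable on that basis. The roadmap before that sentence, however, would not stand as a proof. It has a genuine gap at precisely the step you identify as the main obstacle.

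\textbf{Strict facet inequalities.} Ampleness of $-K_X$ makes the two facet inequalities strict away from $\pm\sigma$. A lattice point of $P=\operatorname{conv}\G(\Sigma)$ on the hyperplane $\sum e_i^*=1$ lies in the unimodular simplex $\operatorname{conv}(e_1,\ldots,e_d)$, so it is one of the $e_i$; the same holds for $-\sigma$. Hence every generator outside $\pm\sigma$ has coordinate sum exactly $0$. Two consequences follow:
\begin{enumerate}[label=\textup{(\alph*)}]
 \item the case of sums ``differing by one'' never occurs;
 \item there are no blocks of size $2m+1$: both $V^{2m}$ and $\tV^{2m}$ appear as blocks of size $2m$ carrying $w=\sum_{i\in A}e_i-\sum_{j\in B}e_j$, with $|A|=|B|=m$ and $A\sqcup B$ the whole block. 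For $V^{2m}$ both $\pm w$ are generators; for $\tV^{2m}$ only one sign is.
\end{enumerate}

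\textbf{Unproved coordinate bound.} Your claim that the coordinates lie in $\{-1,0,1\}$ is asserted, not derived.

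\textbf{The missing key step.} Nothing you list forces a block to carry only one such vector up to sign. Consider the points $\pm e_1,\pm e_2,\pm e_3,e_1-e_2,e_2-e_3$. They are the vertices of a reflexive polytope, since its dual is cut out by a totally unimodular system. This polytope has $\operatorname{conv}(e_1,e_2,e_3)$ and its negative as facets. All coordinates lie in $\{-1,0,1\}$, the extra vertices have coordinate sum $0$, and there is no nonzero interior lattice point. So every test in your sketch is passed, and the block $\{1,2,3\}$ even has the odd size you allowed. Yet this is not a smooth Fano polytope. The relation $e_1+(-e_3)=(e_1-e_2)+(e_2-e_3)$ makes $\operatorname{conv}(e_1,-e_3,e_1-e_2,e_2-e_3)$ a quadrilateral facet, and either triangulation of it produces a degree-zero primitive relation. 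Excluding such configurations requires the full Fano condition, namely that every facet is a unimodular simplex, or equivalently that every primitive relation has positive degree. Working this out combinatorially is the real content of Ewald's theorem, and your appeal to a reduction to vectors ``of type $\pm(e_i-e_j)$'' does not supply it.

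\textbf{What is fine.} The remaining steps are sound and easier than you suggest. Once the generators split along the blocks, $P$ is a free sum, and the face fan of a free sum is the product of the face fans. Also, a Fano fan is the face fan of the convex hull of its generators, so the fan is forced as soon as the generator set is known.
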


The anticanonical calculations above make both Gongyo indices explicit.

\begin{corollary}\label{cor:pseudosym-formula}
Let $X$ have a decomposition as in Theorem~\ref{thm:ewald}.
Then
\begin{align*}
 \tau_X(\ZZ)
 &=2s+3(a+b)+2(p+q),\\
 \tau_X(\QQ)
 &=2s+3(a+b)+2(p+q)
   +\sum_{\alpha=1}^p\frac1{m_\alpha}
   +\sum_{\beta=1}^q\frac1{n_\beta},
\end{align*}
and therefore
\[
 \gap(X)
 =\sum_{\alpha=1}^p\frac1{m_\alpha}
  +\sum_{\beta=1}^q\frac1{n_\beta}.
\]
\end{corollary}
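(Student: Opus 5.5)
The plan is to combine product additivity with the factor-by-factor anticanonical values already computed. By Theorem~\ref{thm:ewald}, $X$ is isomorphic to a product of smooth toric Fano factors. For each factor $F$, $-K_F$ is nef and nonzero, and on a product $-K_{X\times Y}=p_X^*(-K_X)+p_Y^*(-K_Y)$. Applying Proposition~\ref{prop:product} inductively over the factors therefore gives, for $\mathbb K\in\{\ZZ,\QQ\}$,
\[
\tau_X(\mathbb K)=s\,\tau_{\PP^1}(\mathbb K)+a\,\tau_{V^2}(\mathbb K)+b\,\tau_{\tV^2}(\mathbb K)
+\sum_{\alpha}\tau_{V^{2m_\alpha}}(\mathbb K)+\sum_\beta\tau_{\tV^{2n_\beta}}(\mathbb K).
\]
Toric isomorphism invariance of the indices is clear from the definition, so the decomposition up to isomorphism suffices.

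It remains to insert the values for each factor type.
\begin{itemize}
\item For $\PP^1$, the Picard number one discussion gives $\tau(\ZZ)=\tau(\QQ)=2$.
\item For $V^2$ and $\tV^2$, Remark~\ref{rem:m1} gives $3$ for both indices. Alternatively, these are surfaces of Picard number at most $4$; if one wants to avoid relying on the remark, the degree-$7$ surface has $\rho=3$, so Theorem~\ref{thm:small-picard} applies to it directly. For the degree-$6$ hexagon surface one can check directly that $-K=M_-+M_+$ in the coordinates of Section~\ref{sec:ST} with $m=1$, and that $A/m=A$ is at least $1$ on nonzero nef classes by Lemma~\ref{lem:ST-positive-support}.
\item For $V^{2m}$ and $\tV^{2m}$ with $m\ge2$, Corollary~\ref{cor:delpezzo} gives $\tau(\ZZ)=2$ and $\tau(\QQ)=2+1/m$.
\end{itemize}

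Summing these values yields the stated formulas for $\tau_X(\ZZ)$ and $\tau_X(\QQ)$. Subtracting them gives $\gap(X)$; equivalently, the gap can be read off factorwise from the additivity of $\delta$ in Proposition~\ref{prop:product}.

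The only potentially delicate point is the two-dimensional endpoint, where the values come from Remark~\ref{rem:m1} rather than from a theorem proved in the critical-range section. I would simply cite that remark, adding the brief verification above for $V^2$ if needed. Everything else is a direct bookkeeping application of product additivity.
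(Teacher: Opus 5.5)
Your proposal is correct and is the paper's own proof: product additivity (Proposition~\ref{prop:product}) applied to the Ewald decomposition, with the factor values taken from the $\PP^1$ computation, Remark~\ref{rem:m1}, and Corollary~\ref{cor:delpezzo}. One caution about your optional check for $V^2$: the decomposition $-K=M_-+M_+$ has coefficient sum $2$, so together with the bound from $A$ it only gives $2\le\tau(\ZZ)\le\tau(\QQ)\le3$; the witness you actually need is $-K=(2,2,2;3)=L_1+L_2+L_3$, which is the case $m=1$, $n=3$ of the identity $m(-K)=\sum_iL_i+(2m-n+1)M_-$ in the proof of Theorem~\ref{thm:ST-critical}.
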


\begin{proof}
For $\mathbb K\in\{\ZZ,\QQ\}$, one has $\tau_{\PP^1}(\mathbb K)=2$,
while Remark~\ref{rem:m1} gives
$\tau_{V^2}(\mathbb K)=\tau_{\tV^2}(\mathbb K)=3$.
Corollary~\ref{cor:delpezzo} gives the indices of all
factors of dimension at least four.
Adding the contributions of the factors by
Proposition~\ref{prop:product} proves the formulas.
\end{proof}

\begin{theorem}[Main theorem]\label{thm:main}
Let $X$ be a pseudo-symmetric smooth toric Fano variety of dimension
$d\ge4$. If $\gap(X)>0$, then
\[
 \gap(X)\geq\frac1{\lfloor d/2\rfloor}.
\]
Thus Conjecture~\ref{conj:minimal-positive-gap} holds for pseudo-symmetric smooth toric Fano varieties.

Under the assumption $\gap(X)>0$, equality holds as follows:
\begin{enumerate}[label=\textup{(\roman*)}]
 \item if $d=2m$, equality holds if and only if
 \[
  X\simeq V^{2m}\quad\text{or}\quad X\simeq\tV^{2m};
 \]
 \item if $d=2m+1$, equality holds if and only if
 \[
  X\simeq V^{2m}\times\PP^1
  \quad\text{or}\quad
  X\simeq\tV^{2m}\times\PP^1.
 \]
\end{enumerate}
\end{theorem}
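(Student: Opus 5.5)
The proof is a direct combinatorial optimization on top of Corollary~\ref{cor:pseudosym-formula}. By Theorem~\ref{thm:ewald} we write
\[
X\simeq(\PP^1)^s\times(V^2)^a\times(\tV^2)^b\times\prod_{\alpha=1}^pV^{2m_\alpha}\times\prod_{\beta=1}^q\tV^{2n_\beta},
\]
so that
\[
d=s+2a+2b+\sum_\alpha 2m_\alpha+\sum_\beta 2n_\beta
\]
and $\gap(X)=\sum_\alpha 1/m_\alpha+\sum_\beta 1/n_\beta$. The hypothesis $\gap(X)>0$ means $p+q\ge1$. Let $k_1,\dots,k_{p+q}\ge2$ be the list of all the $m_\alpha$ and $n_\beta$. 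Then
\[
\sum_\gamma 2k_\gamma\le d,\qquad\text{hence}\qquad k_\gamma\le\lfloor d/2\rfloor \text{ for each }\gamma.
\]
Since every term is positive, $\gap(X)\ge 1/k_1\ge 1/\lfloor d/2\rfloor$, which proves the inequality.

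For the equality cases, suppose $\gap(X)=1/\lfloor d/2\rfloor$. The lower bound $\sum_\gamma 1/k_\gamma\ge1/k_1\ge1/\lfloor d/2\rfloor$ must then be an equality at both steps. The first forces $p+q=1$, since any further term contributes strictly positively. The second forces $k_1=\lfloor d/2\rfloor$, so the single nontrivial factor is $V^{2k}$ or $\tV^{2k}$ with $k=\lfloor d/2\rfloor$.

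The remaining dimension is $d-2k=s+2a+2b$. If $d=2m$ it is $0$, so $s=a=b=0$ and $X\simeq V^{2m}$ or $\tV^{2m}$. If $d=2m+1$ it is $1$, which forces $a=b=0$ and $s=1$, giving $X\simeq V^{2m}\times\PP^1$ or $\tV^{2m}\times\PP^1$.

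Conversely, each listed variety has gap exactly $1/\lfloor d/2\rfloor$ by Corollary~\ref{cor:pseudosym-formula}. For the product with $\PP^1$ one also uses Proposition~\ref{prop:product} and the fact that $\PP^1$ has zero gap. Since $\lfloor d/2\rfloor=m\ge2$ for $d\ge4$, these factors are genuinely the higher-dimensional ones of Corollary~\ref{cor:delpezzo}, and the listed varieties are pseudo-symmetric Fano by Theorem~\ref{thm:ewald}.

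No step here is a real obstacle; all the substantive work is already in Corollary~\ref{cor:delpezzo} and Corollary~\ref{cor:pseudosym-formula}. The one point needing care is that equality in $\gap(X)\ge1/k_1$ genuinely forces $p+q=1$ rather than allowing several factors whose reciprocals happen to sum to $1/\lfloor d/2\rfloor$. This holds because each additional term contributes strictly, so the gap with two or more factors strictly exceeds $1/k_1\ge1/\lfloor d/2\rfloor$.
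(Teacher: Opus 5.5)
Your proof is correct and is essentially the paper's argument: Ewald's decomposition combined with Corollary~\ref{cor:pseudosym-formula}, a lower bound on the sum of reciprocals that is strict when two or more factors of type $V^{2m}$ or $\tV^{2m}$ occur, and a dimension count for the equality cases. The only difference is that the paper bounds the sum by $1/R$ with $R=\sum_i r_i$ rather than by a single $1/k_1$. One small quibble: Theorem~\ref{thm:ewald} as stated goes only one way, so it does not show that the listed varieties are pseudo-symmetric, but this is not needed because $X$ is assumed pseudo-symmetric in the statement.
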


\begin{proof}
Take the decomposition in Theorem~\ref{thm:ewald}, and list
$m_1,\ldots,m_p,n_1,\ldots,n_q$ as $r_1,\ldots,r_k$,
where $k=p+q$.
Each $r_i\geq2$, and $k\geq1$ since $\gap(X)>0$.
Corollary~\ref{cor:pseudosym-formula} gives
\[
 \gap(X)=\sum_{i=1}^k\frac1{r_i}.
\]
Put $R=r_1+\cdots+r_k$.  Since $r_i\leq R$,
\[
 \gap(X)=\sum_{i=1}^k\frac1{r_i}
 \geq\frac1R.
\]
The decomposition also gives
\[
 d=s+2a+2b+2R,
\]
so $R\leq\lfloor d/2\rfloor$.
Hence
\[
 \gap(X)\geq\frac1R
 \geq\frac1{\lfloor d/2\rfloor}.
\]

Suppose equality holds.  If $k\geq2$, then
\[
 \sum_{i=1}^k\frac1{r_i}\geq\frac{k}{R}>\frac1R,
\]
so $k=1$.  Equality in the second inequality forces
\[
 r_1=R=\left\lfloor\frac d2\right\rfloor.
\]
If $d=2m$, then $R=m$ and $s+2a+2b=0$,
so $s=a=b=0$ and $X$ is $V^{2m}$ or $\tV^{2m}$.
If $d=2m+1$, then $R=m$ and $s+2a+2b=1$,
which forces $s=1$ and $a=b=0$.
Thus $X$ is $V^{2m}\times\PP^1$ or
$\tV^{2m}\times\PP^1$.
The converse follows from Corollary~\ref{cor:delpezzo}
and Proposition~\ref{prop:product}.
\end{proof}

The preceding theorem gives a sharp lower bound in each fixed
dimension within the pseudo-symmetric class.
When the dimension is allowed to vary, however, every positive
rational number occurs as a gap.

\begin{corollary}[Every positive rational occurs]\label{cor:any-rational}
For every $r\in\QQ_{>0}$, there exists a pseudo-symmetric
smooth toric Fano variety $X$ such that
\[
 \gap(X)=r.
\]
\end{corollary}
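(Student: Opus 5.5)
The plan is to use Corollary~\ref{cor:pseudosym-formula}. It shows that the gap of a pseudo-symmetric variety of the form in Theorem~\ref{thm:ewald} is the sum $\sum_\alpha 1/m_\alpha+\sum_\beta 1/n_\beta$, with all $m_\alpha,n_\beta\ge2$. It therefore suffices to show that every $r\in\QQ_{>0}$ is a finite sum of unit fractions $1/k$ with each $k\ge2$, repetitions allowed. Given such a representation $r=\sum_{i=1}^{\ell}1/k_i$, I would take $X=\prod_{i=1}^{\ell}V^{2k_i}$. One could equally use the factors $\tV^{2k_i}$, or a mixture of the two.

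The key steps are as follows.
\begin{enumerate}[label=(\arabic*)]
\item Write $r=a/b$ with $a,b\in\ZZ_{>0}$. Replace it by $r=2a/(2b)$, so that the denominator $k:=2b$ satisfies $k\ge2$.
\item Then $r=\sum_{i=1}^{2a}\frac1{2b}$ is a sum of $2a$ copies of $1/k$ with $k\ge2$.
\item Put $X=(V^{2k})^{2a}$, the product of $2a$ copies of the toric del Pezzo variety $V^{2k}$. This is defined because $k\ge2$.
\item Check that $X$ is a pseudo-symmetric smooth toric Fano variety.
\end{enumerate}

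For step (4), a product of smooth toric Fano varieties is smooth toric Fano, since $-K$ of the product is the sum of the pullbacks and is ample. Each $V^{2k}$ is pseudo-symmetric: this is part of Ewald's classification, or directly, the fan of the del Pezzo variety is centrally symmetric. If $\sigma_i$ and $-\sigma_i$ are maximal cones of the factors, then the product cone $\prod\sigma_i$ and its negative are both maximal cones of the product fan. Hence $X$ is pseudo-symmetric.

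Finally, Corollary~\ref{cor:pseudosym-formula}, or equivalently Corollary~\ref{cor:delpezzo} combined with Proposition~\ref{prop:product}, gives $\gap(X)=2a\cdot\frac1{2b}=r$.

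The only point needing any care is step (4), verifying the pseudo-symmetric property. It is handled by the product-of-cones observation, together with the fact that $V^{2k}$ itself contains a centrally symmetric pair of maximal cones, which is the standard property of del Pezzo varieties. Everything else is immediate from the earlier corollaries.
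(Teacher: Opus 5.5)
Your proof is correct and is essentially the paper's argument: the paper also takes a power of a single del Pezzo variety and applies Corollary~\ref{cor:delpezzo} together with Proposition~\ref{prop:product}. The only difference is that the paper splits into the cases $q\ge2$, using $(V^{2q})^p$, and $q=1$, using $(V^4)^{2p}$, whereas your rewriting $a/b=2a/(2b)$ handles both cases at once, at the price of a larger dimension.
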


\begin{proof}
Write $r=p/q$ with $p,q\in\ZZ_{>0}$.
If $q\ge2$, take $X=(V^{2q})^p$.
By Corollary~\ref{cor:delpezzo} and
Proposition~\ref{prop:product}, we have $\gap(X)=p/q=r$.
If $q=1$, take $X=(V^4)^{2p}$; then $\gap(X)=p=r$.
Both constructions are pseudo-symmetric.
\end{proof}

\appendix

\section{Exact calculations in dimensions at most seven}\label{sec:computations}
We record the computational evidence that motivated
Conjecture~\ref{conj:minimal-positive-gap}.
The cases of dimensions one and two are treated separately below.
For dimensions three through six, we used the database in the Macaulay2
package \texttt{NormalToricVarieties}, with zero-based database indices
\cite{batyrev-fano-fourfolds,sato-towardfano,macaulay2-normal-toric}.
For dimension seven, we used Paffenholz's vertex-format lists of the
$72{,}256$ smooth reflexive polytopes obtained from \O bro's classification
\cite{obro-classification,paffenholz-fano-data}.
The input files are \texttt{fano-v7d-0.tgz} through
\texttt{fano-v7d-7.tgz}.
For an input polytope $Q$, the toric fan is the face fan of its polar dual
$Q^*$. The original archive-member names and zero-based global indices
are retained in the computational supplement. The seven-dimensional source
vertex arrays and their full fan representations are not redistributed;
a supplied retrieval script obtains the original archives directly from
the distribution site and checks their recorded SHA-256 values.
We also checked that the $866$ five-dimensional input fans are pairwise
non-isomorphic as lattice fans.

For each variety in dimensions three through seven, we computed the fan,
the Picard lattice, the nef cone, and the Hilbert basis of
$\Nef(X)\cap\Pic(X)$ using Macaulay2, Normaliz, and SageMath
\cite{macaulay2-system,normaliz,sagemath}.
The saved export metadata record Macaulay2~1.26.06, and the original
pipeline documentation specifies SageMath~10.8.
A separate installation snapshot dated 18~September~2026 records
\texttt{NormalToricVarieties}~1.9, the Macaulay2
\texttt{Normaliz} interface~2.6, \texttt{Polyhedra}~1.10,
and the Normaliz executable~3.11.1.
The supplement distinguishes these later observations from the original
run metadata and records the remaining dependencies there.

The rational optimum was obtained from an exact linear computation,
and the integral optimum from an exact integer-hull computation or a
finite multiset search. Both values admit short exact certificates.
Writing $\cH_X=\{h_1,\ldots,h_N\}$ and $b=[-K_X]$, each saved record
contains coefficients $x_j\in\QQ_{\ge0}$ and $z_j\in\ZZ_{\ge0}$,
and a rational linear functional $\lambda$, satisfying
\begin{equation}\label{eq:computation-certificate}
\begin{gathered}
 b=\sum_jx_jh_j=\sum_jz_jh_j,\qquad
 \lambda(h_j)\ge1\quad(1\le j\le N),\\
 \sum_jx_j=\lambda(b),\qquad
 \sum_jz_j=\lfloor\lambda(b)\rfloor.
\end{gathered}
\end{equation}
Thus Corollary~\ref{cor:dual} proves that
$\tau_X(\QQ)=\lambda(b)$; the integral witness and
$\tau_X(\ZZ)\le\lfloor\tau_X(\QQ)\rfloor$ give
$\tau_X(\ZZ)=\lfloor\lambda(b)\rfloor$ for every record in these lists.
All equalities and inequalities in \eqref{eq:computation-certificate}
were independently rechecked over $\QQ$, including all
$72{,}256$ seven-dimensional records.
This certificate verification uses the supplied Hilbert bases;
reconstructing the nef cones and verifying Hilbert-basis completeness
are separate steps. The code for these steps is supplied; in dimension seven
it uses the separately retrieved inputs.
The replay code was also tested on five selected cases in dimensions
four, six and seven, reconstructing their nef cones and Hilbert bases
from the saved fans and reproducing both indices. This was a
selected-case test, not a full-census reconstruction.

The computational supplement~\cite{gongyo-computation-data}
(version~1.0.2) contains the computation and verification code,
exact certificates, source identifiers and checksums, together
with the input fans in dimensions three through six.
The seven-dimensional inputs are obtained separately using
the supplied retrieval script.
The README gives separate instructions for checking the
numerical certificates, reconstructing the geometric data,
and regenerating the tables.

\subsection{The elementary cases in dimensions one and two}

In dimension one, the only smooth toric Fano variety is $\PP^1$.  If $H$ is
the positive generator of $\Pic(\PP^1)$, then $-K_{\PP^1}=2H$ and
$\Nef(\PP^1)\cap\Pic(\PP^1)=\ZZ_{\geq0}H$.
Hence
$\left(\tau_{\PP^1}(\ZZ),\tau_{\PP^1}(\QQ)\right)=(2,2)$.

The five smooth toric Fano surfaces and their Gongyo indices
are listed in Table~\ref{tab:surface-indices}; see
\cite[Theorem~5.7]{enwright-et-al-nef-complexity}.
The blow-up centres in the table are distinct torus-fixed points.
\begin{center}
\begin{minipage}{\linewidth}
\normalfont
\captionof{table}{Gongyo indices of smooth toric del Pezzo surfaces.}
\label{tab:surface-indices}
\centering
\begin{tabular}{>{$}c<{$} >{$}c<{$}}
\toprule
 X&\bigl(\tau_X(\ZZ),\tau_X(\QQ)\bigr)\\
\midrule
 \PP^2&(3,3)\\
 \PP^1\times\PP^1&(4,4)\\
 \operatorname{Bl}_{p_1,\ldots,p_r}\PP^2,\quad 1\le r\le3&(3,3)\\
\bottomrule
\end{tabular}
\end{minipage}
\end{center}
In particular, every smooth toric Fano surface has zero Gongyo-index gap.
Notice also that
$\operatorname{Bl}_{p_1,p_2,p_3}\PP^2$
has a non-simplicial nef cone, so zero gap does not imply simpliciality even
in dimension two.

\subsection{The complete classifications through dimension seven}

\begin{theorem}[Computations through dimension seven]\label{thm:low-dimensional-computation}
For the complete classifications of smooth toric Fano varieties in
dimensions one through seven, Table~\ref{tab:classification} holds.
Here
``simplicial/unimodular'' counts varieties with a simplicial nef cone; every
such cone in the data is in fact unimodular.  The final column counts
varieties with zero gap and a non-simplicial, hence non-unimodular, nef cone.
\begin{center}
\begin{minipage}{\linewidth}
\normalfont\small
\captionof{table}{Gongyo-index gaps and nef cones through dimension seven.}
\label{tab:classification}
\centering
\setlength{\tabcolsep}{3.5pt}
\begin{tabular}{c r r l r r}
\toprule
$d$ & total & $\gap>0$ & \shortstack{positive-gap\\distribution}
& \shortstack{simplicial/\\unimodular}
& \shortstack{$\gap=0$,\\non-simplicial}\\
\midrule
1 & 1      & 0   & none                                      & 1      & 0\\
2 & 5      & 0   & none                                      & 4      & 1\\
3 & 18     & 0   & none                                      & 16     & 2\\
4 & 124    & 2   & $2$ of gap $1/2$                         & 101    & 21\\
5 & 866    & 8   & $8$ of gap $1/2$                         & 702    & 156\\
6 & $7{,}622$   & 81  & $75$ of gap $1/2$, $6$ of gap $1/3$     & $5{,}891$   & $1{,}650$\\
7 & $72{,}256$  & 727 & $684$ of gap $1/2$, $43$ of gap $1/3$   & $53{,}903$  & $17{,}626$\\
\bottomrule
\end{tabular}
\end{minipage}
\end{center}
In particular:
\begin{enumerate}[label=\textup{(\roman*)}]
 \item no positive gap occurs in dimensions at most three;
 \item the only positive gaps occurring in dimensions at most seven are
 $1/2$ and $1/3$;
 \item every gap example in dimensions at most seven has a non-simplicial
 nef cone, whereas every simplicial nef cone occurring in these
 classifications is unimodular;
 \item non-unimodularity is far from sufficient for a gap: there are already
 zero-gap examples with non-simplicial nef cone in dimension two, and there
 are $17{,}626$ such examples in dimension seven.
\end{enumerate}
\end{theorem}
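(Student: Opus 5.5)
This statement is a computational census rather than a conceptual theorem, so the proof will be a certified computation organized so that every reported number rests on an exact, independently checkable certificate. Dimensions one and two are handled by hand, as in the preceding subsection: $\PP^1$ directly, and the five toric del Pezzo surfaces through Table~\ref{tab:surface-indices}. Here the unimodular cases follow from Proposition~\ref{prop:unimodular-gap-zero} (Picard number at most three, by Theorem~\ref{thm:small-picard}). The single non-simplicial case $\operatorname{Bl}_{p_1,p_2,p_3}\PP^2$ requires an explicit Hilbert basis together with a functional $\lambda$ as in Corollary~\ref{cor:dual}.

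For $3\le d\le7$ I will enumerate the complete classifications: the \texttt{NormalToricVarieties} database for $d\le6$, and the $72{,}256$ smooth reflexive polytopes of \O bro's list via Paffenholz's files for $d=7$. For each fan I will proceed in order.
\begin{enumerate}
 \item Compute $\Pic(X)$ as the cokernel of $M\to\ZZ^{\Sigma(1)}$.
 \item Compute $\Nef(X)$ as the dual of the cone generated by the primitive-relation classes, or equivalently by wall relations.
 \item Compute the Hilbert basis $\cH_X$ of $\Nef(X)\cap\Pic(X)$ with Normaliz.
 \item Solve the linear program of Proposition~\ref{prop:hilbert-formulation} exactly over $\QQ$, recording a primal optimum $x$ and a dual functional $\lambda$ with $\lambda(h_j)\ge1$.
 \item Exhibit an integral decomposition $z$ with $\sum z_j=\lfloor\lambda(b)\rfloor$.
\end{enumerate}
The certificate \eqref{eq:computation-certificate} then yields $\tau_X(\QQ)=\lambda(b)$ by Corollary~\ref{cor:dual}, and $\tau_X(\ZZ)=\lfloor\lambda(b)\rfloor$ because $\tau_X(\ZZ)\le\tau_X(\QQ)$ and $\tau_X(\ZZ)$ is an integer.

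The table columns are then tallies. The gap equals $\lambda(b)-\lfloor\lambda(b)\rfloor$, and its value distribution gives items (i) and (ii). Simpliciality of the nef cone is read off from the number of extremal rays, which equals $\rho(X)$ exactly when the cone is simplicial. For simplicial cones I check unimodularity by computing the determinant of the primitive ray generators in a basis of $\Pic(X)$, which must be $\pm1$. Item (iii) then follows from the observation that every gap example lies in the non-simplicial column, consistent with Proposition~\ref{prop:unimodular-gap-zero}. Item (iv) is just the last column. As consistency checks, I will confirm that the two positive-gap fourfolds are $V^4$ and $\tV^4$, in agreement with Corollary~\ref{cor:delpezzo}, and that the classification inputs are pairwise non-isomorphic, so the totals are correct.

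The main obstacle is trusting the Hilbert bases. The certificates verify the optimum only relative to the supplied $\cH_X$: if $\cH_X$ were incomplete, $\lambda(h)\ge1$ could fail for a missing element, and the rational value would be wrong. I would first try to certify completeness directly. Any element of $\Gamma_X$ lies in the image of the fundamental parallelepipeds of a triangulation of $\Nef(X)$, so it suffices to check that every lattice point of those parallelepipeds is a nonnegative integral combination of the listed elements. Where that is too expensive, notably in dimension seven, I would fall back on reconstructing the data independently with a second tool and comparing the results. The integral optimum is less delicate, since a single witness $z$ together with the floor bound settles it.
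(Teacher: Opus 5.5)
Your proposal follows essentially the same route as the paper. Dimensions one and two are handled by hand. For the Macaulay2 database fans and Paffenholz's seven-dimensional lists, both you and the paper solve the exact Hilbert-basis linear program, record certificates of the form \eqref{eq:computation-certificate}, and read the table entries and items (i)--(iv) off as tallies.

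The one genuine addition is your parallelepiped certificate for Hilbert-basis completeness. The paper does not carry this out across the census: it checks the certificates only relative to the supplied Hilbert bases and reconstructs those bases only for five selected cases. So your check would strengthen the paper's verification rather than change its structure.
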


Altogether, the seven classifications contain $80{,}892$ varieties.  The two
Gongyo indices agree for $80{,}074$ of them and differ for $818$, or about
$1.01\%$.  Among the latter, $769$ have gap $1/2$ and $49$ have gap $1/3$.
The smallest positive gap is therefore $1/2$ in dimensions four and five and
$1/3$ in dimensions six and seven, in agreement with the lower bound proposed
in Conjecture~\ref{conj:minimal-positive-gap}.

The pairs $(\tau_X(\ZZ),\tau_X(\QQ))$ occurring among the positive-gap
examples are listed in Table~\ref{tab:gap-pairs}.
\begin{center}
\begin{minipage}{\linewidth}
\normalfont
\captionof{table}{Index pairs among the positive-gap examples.}
\label{tab:gap-pairs}
\centering
$\begin{array}{c|c|r}
 d&(\tau_X(\ZZ),\tau_X(\QQ))&\text{number}\\
\hline
 4&(2,5/2)&2\\
\hline
 5&(2,5/2)&1\\
  &(3,7/2)&5\\
  &(4,9/2)&2\\
\hline
 6&(2,7/3)&6\\
  &(2,5/2)&3\\
  &(3,7/2)&30\\
  &(4,9/2)&27\\
  &(5,11/2)&13\\
  &(6,13/2)&2\\
\hline
 7&(2,7/3)&12\\
  &(2,5/2)&6\\
  &(3,10/3)&25\\
  &(3,7/2)&160\\
  &(4,13/3)&6\\
  &(4,9/2)&268\\
  &(5,11/2)&166\\
  &(6,13/2)&69\\
  &(7,15/2)&13\\
  &(8,17/2)&2
\end{array}$
\end{minipage}
\end{center}
The two four-dimensional examples are $\tV^4$ and $V^4$.  In dimension five,
the database indices of the eight gap examples are
$269,270,271,273,441,445,446,618$.
In dimension six, the six examples of gap $1/3$ include the three
models from Subsection~\ref{subsec:ST-critical}
\[
 \tB_5^6,\qquad \tB_6^6=\tV^6,\qquad \tB_7^6=V^6.
\]

\begin{remark}[Structure of the positive-gap examples]\label{rem:gap-structures}
The saved fans also exhibit a recurrent geometric pattern.
Among the eight five-dimensional positive-gap examples, six are
$V^4$- or $\tV^4$-bundles over $\PP^1$, including the two direct products.
The remaining two are of the form $H(x,2)(Y)$ with $Y$ a smooth
projective toric weak Fano fourfold which is not Fano.
In dimension six, $59$ of the $81$ examples admit toric bundle
presentations with lower-dimensional positive-gap fibres. Of the remaining $22$, twenty are obtained by
$H(x,2)$ from smooth projective toric weak Fano fivefolds which are not
Fano, and the other two are $V^6$ and $\tV^6$.
In dimension seven, $593$ of the $727$ examples admit toric bundle
presentations, including $131$ direct products; lower-dimensional
positive-gap fibres also occur widely among these presentations.
Each of the remaining
$134$ is obtained by $H(x,2)$ from a smooth projective toric weak Fano
sixfold which is not Fano.

Here ``bundle'' means an equivariantly locally trivial toric fibre
bundle, and the counts refer to varieties, not to choices of bundle
structure.
A bundle presentation was checked using a partition of the
rays for which the maximal cones form a combinatorial product
and the fibre rays span a real subspace of the expected dimension.
The fibre generators in a maximal cone form part of a lattice
basis, so the corresponding fibre lattice is saturated.
For an $H(x,2)$ presentation, we reconstructed the source fan
and checked the lattice coordinates, all maximal cones, and the support
inequalities for projectivity and nefness of the anticanonical class.
The corresponding finite certificates are included in the supplement;
verification in dimension seven uses the separately retrieved input data.
These observations concern only the classifications through dimension
seven; the bundle and $H(x,2)$ descriptions need not be mutually exclusive.
They do not assert additivity of Gongyo indices for nontrivial bundles.
\end{remark}

The nef-cone counts in Table~\ref{tab:classification} motivate the
following question. Simpliciality alone does not imply unimodularity
with respect to the lattice, but every simplicial nef cone in these
computations is unimodular.

\begin{question}\label{question:simplicial}
Let $X$ be a smooth toric Fano variety.  If $\Nef(X)$ is simplicial, must it
be unimodular with respect to $\Pic(X)$?  More weakly, must one have
$\delta(X)=0$?
\end{question}

\bibliographystyle{amsalpha}
\bibliography{ref}

\end{document}